\newtheorem{proposition}{Proposition}[section]
\newtheorem{theorem}[proposition]{Theorem}
\newtheorem{lemma}[proposition]{Lemma}
\newtheorem{corollary}[proposition]{Corollary}
\newtheorem{example}[proposition]{Example}
\newtheorem{algorithm}[proposition]{Algorithm}
\newenvironment{proof}{{\noindent \bf Proof:}}{\hfill $\fbox{}$ \vspace*{5mm}}
\newcommand{\ba}{{\bf a}}
\newcommand{\bc}{{\bf c}}
\newcommand{\be}{{\bf e}}
\newcommand{\bw}{{\bf w}}
\newcommand{\la}{\lambda}
\newcommand{\La}{\Lambda}
\newcommand{\ca}{\mathcal{A}}
\newcommand{\cd}{\mathcal{D}}
\newcommand{\co}{\mathcal{O}}
\newcommand{\cx}{\mathcal{X}}
\newcommand{\cy}{\mathcal{Y}}
\newcommand{\diag}{{\rm diag}}
\newcommand{\dist}{{\rm dist}}
\newcommand{\grad}{{\rm grad\;}}
\newcommand{\Hess}{{\rm Hess\;}}
\newcommand{\ve}{{\rm vec}}
\newcommand{\tr}{{\rm tr}}
\newcommand{\R}{{\mathbb R}}
\newcommand{\Rnn}{{\mathbb R}^{n\times n}}
\newcommand{\SRn}{\mathbb{SR}^{n\times n}}
\newcommand{\BE}{\begin{equation}}
\newcommand{\EE}{\end{equation}}
\newcommand{\normmm}[1]{{\vert\kern-0.25ex \vert\kern-0.25ex \vert #1
    \vert\kern-0.25ex \vert\kern-0.25ex\vert}}
\begin{document}

\title{A Preconditioned Riemannian Gauss-Newton Method for Least Squares Inverse Eigenvalue Problems}
\author{Teng-Teng Yao\thanks{Department of Mathematics, School of Sciences, Zhejiang University of Science and Technology, Hangzhou 310023, People's Republic of China (yaotengteng718@163.com). The research of this author is supported by the National Natural Science Foundation of China (No. 11701514).}
\and Zheng-Jian Bai\thanks{Corresponding author. School of Mathematical Sciences and Fujian Provincial Key Laboratory on Mathematical Modeling \& High Performance Scientific Computing,  Xiamen University, Xiamen 361005, People's Republic of China (zjbai@xmu.edu.cn). The research of this author is partially supported by the National Natural Science Foundation of China (No. 11671337), the Natural Science Foundation of Fujian Province of China (No. 2016J01035), and the Fundamental Research Funds for the Central Universities (No. 20720180008).}
 \and Xiao-Qing Jin\thanks{Department of Mathematics, University of Macau, Macao, People's Republic of China (xqjin@umac.mo). The research of this author is supported by the research grant MYRG2016-00077-FST from University of Macau.}
 \and Zhi Zhao\thanks{Department of Mathematics, School of Sciences, Hangzhou Dianzi University, Hangzhou 310018, People's Republic of China (zhaozhi231@163.com). The research of this author is supported by the National Natural Science Foundation of China (No. 11601112).}
 }

%\date{Revised: October 18, 2007}
\maketitle
%-----------------------------------------------------------------------
\begin{abstract}
This paper is concerned with the  least squares inverse eigenvalue problem of reconstructing a linear parameterized real symmetric matrix from the prescribed partial eigenvalues in the sense of least squares, which was originally proposed by Chen and Chu [SIAM J. Numer. Anal., 33 (1996), pp. 2417--2430].
We provide a Riemannian inexact Gausss-Newton method for solving the least squares inverse eigenvalue problem.  The global  and local convergence analysis of the proposed method is discussed. Also, a preconditioned conjugate gradient method with an efficient preconditioner is proposed for solving the Riemannian Gauss-Newton equation. Finally, some numerical tests, including an application in  the inverse Sturm-Liouville problem,  are reported to illustrate the efficiency of the proposed method.
\end{abstract}

\vspace{3mm}

{\bf Keywords.} least squares inverse eigenvalue problem, Riemannian Gauss-Newton method,  preconditioner

\vspace{3mm}
{\bf AMS subject classifications.} 65F18,  65F15, 15A18, 58C15
%-----------------------------------------------------------

\section{Introduction}\label{sec1}
An inverse eigenvalue problem (IEP) aims to reconstruct a structured matrix from the prescribed spectral data.
Inverse eigenvalue problems (IEPs) arise in various applications such as structural dynamics,  vibration, inverse Sturm-Liouville problem, control design, geophysics,
nuclear spectroscopy and molecular spectroscopy, etc. For the existence theory, numerical methods and applications of general IEPs, one may refer to
\cite{C98,CG02,CG05,CEH12,D03,FM95,G04,X98} and references therein.

In this paper, we consider the following least squares inverse eigenvalue problem, which was originally given by Chen and Chu \cite{CC96}.

{\bf LSIEP I.} {\em Given $l+1$ real symmetric matrices $A_0,A_1,\ldots,A_l \in \Rnn$ and $m$ real numbers $\lambda^*_1 \leq \lambda^*_2 \leq \cdots \leq \lambda^*_m$ $(m\leq n)$, find a vector $\mathbf{c}=(c_1,\ldots,c_l)^T\in \mathbb{R}^{l}$ and a permutation $\sigma =\{\sigma_1,\sigma_2,\ldots,\sigma_m\}$ with $1 \leq \sigma_1 < \sigma_2 < \cdots < \sigma_m \leq n$ to minimize the function
\[
f(\mathbf{c},\sigma) : = \frac{1}{2} \sum_{i=1}^m(\lambda_{\sigma_i}(\mathbf{c}) - \lambda_i^*)^2,
\]
where the real numbers $\lambda_1(\mathbf{c})\leq \lambda_2(\mathbf{c}) \leq \cdots \leq \lambda_n(\mathbf{c})$
are the eigenvalues of the matrix $A(\mathbf{c})$ defined by
\[
A(\mathbf{c}):= A_0 + \sum\limits_{i=1}^{l} c_i A_i.
\]
}

This is a nonlinear least-squares problem, where the cost function $f(\mathbf{c},\sigma)$ is a function of
a continuous variable $\bc$ and  a discrete variable $\sigma$. This is a special kind of mixed
optimization problem, where the function $f(\mathbf{c},\sigma)$ is nondifferentiable when
the perturbation $\sigma$ is changed. For the LSIEP I, there exists an equivalent least-squares problem
defined on a product manifold. Let $\cd(m)$ and $\mathcal{O}(n)$ denote the set of all real diagonal
matrices of order $m$ and the set of all real $n\times n$ orthogonal matrices, respectively.
Define $\Lambda^*_m := {\rm diag} (\lambda^*_1, \lambda^*_2, \ldots, \lambda^*_m)$, where
$\diag(\ba)$ denotes a diagonal matrix with $\ba$ on its diagonal. Given a matrix $\Lambda\in \cd(n-m)$,
${\rm blkdiag}\left(\Lambda^*_m \;,\; \Lambda\right)$ denotes the block diagonal
matrix obtained from $\Lambda^*_m$ and $\Lambda$. Based on Theorem 3.1 in \cite{CC96},
the LSIEP I is equivalent to the following problem.

{\bf LSIEP II.} {\em
Given $l+1$ real symmetric matrices $A_0,A_1,\ldots,A_l \in \Rnn$ and  $m$ real numbers
$\lambda^*_1 \leq \lambda^*_2 \leq \cdots \leq \lambda^*_m$ $(m\leq n)$, find a vector
$\mathbf{c}\in \mathbb{R}^{l}$, an orthogonal matrix $Q\in \mathcal{O}(n)$, and
a diagonal matrix $\Lambda\in \cd(n-m)$ to minimize the function
\[
h(\mathbf{c},Q,\Lambda) : = \frac{1}{2} \| A(\mathbf{c}) -
Q {\rm blkdiag}\left(\Lambda^*_m,\; \Lambda\right) Q^T \|_F^2,
\]
where $\|\cdot \|_F$ denotes the Frobenius matrix norm.
}

The LSIEP II is a nonlinear least-squares problem defined on the product manifold
$\mathbb{R}^{l} \times \mathcal{O}(n) \times \cd(n-m)$.
To solve the LSIEP II, Chen and Chu \cite{CC96} proposed a lift and projection (LP) method.
This method is a modification of the alternating projection method to an affine space
and a Riemannian manifold. To solve the LSIEP I, Chen and Chu \cite{CC96} proposed a hybrid method called the LP-Newton method. The idea is that an initial guess  of $(\bc, \sigma)$ is obtained by using  the LP method to  the LSIEP II, then the Newton method is applied to the LSIEP I by fixing the value of $\sigma$. This method has fast local convergence while it requires an exact  guess value of $\sigma$ via the LP method. We note that the LP-Newton method works only for small problems since the forming of the Hessian matrix of $f(\mathbf{c},\sigma)$ is very expensive. If $m=l$, then there is no permutation $\sigma$ in the function $f(\mathbf{c},\sigma)$. In this case,  the LSIEP I becomes a continuous optimization
problem. For this special case, Wang and Vong \cite{WV13} proposed a Gauss-Newton-like method.

Optimization methods on smooth manifolds have been widely studied and applied to
various kinds of areas such as numerical linear algebra and dynamical systems (see for instance \cite{ABG07,AMS08,AM12,ADM02,CD91,CG98,HM94,S94} and references therein). Recently, some Riemannian optimization methods were proposed for solving nonlinear eigenvalue problems and inverse eigenvalue problems \cite{YBZC16,ZBJ15,ZBJ17,ZJB16}.
In this paper, we propose a Riemannian inexact Gauss-Newton method for solving the LSIEP II. In \cite{AMS08}, Absil et al.
proposed a Riemannian Gauss-Newton method for  solving nonlinear least squares problems
defined between Riemannian manifold and Euclidean space, where the convergence analysis was not discussed. In \cite{GLN07}, Gratton et al. gave some approximate Gauss-Newton methods
for solving nonlinear least squares problems defined on Euclidean  space.
Sparked by \cite{AMS08,GLN07}, we present an efficient
Riemannian inexact Gauss-Newton method for solving the LSIEP II. The global convergence and local convergence rate are also discussed. An effective preconditioner is proposed for solving the Riemannian Gauss-Newton equation via the conjugate gradient (CG) method \cite{GV96}. Finally, some numerical experiments, including an application in  the inverse Sturm-Liouville problem,  are reported to show the efficiency of the proposed method for solving the LSIEP II.

Throughout this paper, we use the following notation.  The symbol $A^T$ denotes the transpose
of a matrix $A$. The symbol ${\rm Diag}(M):= {\rm diag}(m_{11}, m_{22}, \ldots, m_{nn})$ denotes
a diagonal matrix containing the diagonal elements of an $n \times n$ matrix $M = [ m_{ij} ]$.  Let  $\mathbf{0}_{m\times n}$ be the $m\times n$ zero matrix and $\be_k$ be the $k$-th column of the identity matrix $I_n$ of order $n$.  Let $\Rnn$ and $\SRn$ be the set of
all $n$-by-$n$  real matrices and the set of all $n$-by-$n$ real symmetric matrices, respectively.  Denote by $\tr(A)$ the trace of a square matrix $A$. For two matrices $A,B\in \Rnn$, $[A,B]:=AB-BA$ mean the Lie Bracket of $A$ and $B$.
Let $\ve(A)$ be the vectorization of a matrix $A$, i.e., a column vector obtained by stacking
the columns of $A$ on top of one another. For two finite-dimensional vector spaces $\cx$ and $\cy$
equipped with a scalar inner product $\langle\cdot,\cdot\rangle$ and its induced norm
$\|\cdot\|$, let $\ca:\cx\to \cy$ be a linear operator and the adjoint operator of $\ca$
be denoted by $\ca^*$. The operator norm of $\ca$ is defined by
$\normmm{\ca}:=\sup\{ \| \ca x\| \ | \ x\in\cx\mbox{ with } \|x\|=1\}$.

The remainder of this paper is organized as follows. In section \ref{sec2} we propose a Riemannian inexact Gauss-Newton method for solving the LSIEP II. In section \ref{sec3} we establish the global convergence and local convergence rate of the proposed approach under some conditions. A preconditioner is also proposed for solving the Riemannian Gauss-Newton equation. Finally, we report some numerical tests
in section \ref{sec4} and give some concluding remarks in section \ref{sec5}.

\section{Riemannian inexact Gauss-Newton method}\label{sec2}
In this section, we present a Riemannian inexact Gauss-Newton method for solving the LSIEP II.
Define  an affine subspace and an isospectral manifold   by
\begin{eqnarray*}
&\mathcal{A}: =\big\{ A_0 + \sum_{i=1}^{l} c_i A_i \ | \  c_i \in \R, i=1,2,\ldots,l \big\}, &\\[2mm]
&\mathcal{M}(\Lambda^*_m): =  \big \{ X = Q {\rm blkdiag}\left(\Lambda^*_m, \Lambda\right) Q^T  \in \SRn \; | \;
Q\in \mathcal{O}(n),\; \Lambda \in \cd(n-m) \big\}.&
\end{eqnarray*}
We see that $\mathcal{M}(\Lambda^*_m)$ is the set of all  real $n\times n$ symmetric matrices whose spectrum
contains the $m$ real numbers $\lambda^*_1, \lambda^*_2, \ldots, \lambda^*_m$.
Thus, the LSIEP II has a solution such that $h(\mathbf{c},Q,\Lambda) =0$ if and only if $\ca\cap \mathcal{M}(\Lambda^*_m) \neq \emptyset$.

Let $H$ be a nonlinear mapping between $\mathbb{R}^{l} \times \mathcal{O}(n) \times \cd(n-m)$ and $\mathbb{SR}^{n\times n}$ defined by
\BE\label{NMF}
H(\mathbf{c}, Q, \Lambda) = A(\mathbf{c}) -Q {\rm blkdiag}\left(\Lambda^*_m \;,\Lambda\right) Q^T,
\EE
for all $(\mathbf{c}, Q, \Lambda)\in\mathbb{R}^{l} \times \mathcal{O}(n) \times \cd(n-m)$.
Then, the LSIEP II can be written  as the following minimization problem:
\BE\label{LSIEP}
\begin{array}{lc}
\min  & \displaystyle  h(\mathbf{c}, Q, \Lambda):=  \frac{1}{2} \|H(\mathbf{c}, Q, \Lambda)\|_F^2 \\[2mm]
\mbox{subject to (s.t.)} &  \quad  (\mathbf{c}, Q, \Lambda)\in \mathbb{R}^{l} \times \mathcal{O}(n) \times \cd(n-m).
\end{array}
\EE

Sparked by the ideas in \cite{AMS08,GLN07}, we propose a Riemannian inexact Gauss-Newton method for solving Problem (\ref{LSIEP}). We note that the dimension of the product manifold $\mathbb{R}^{l}\times \co(n)\times \cd(n-m)$ is given by
\[
\text{dim}(\mathbb{R}^{l} \times \mathcal{O}(n) \times \cd(n-m))  =  l + \frac{n(n-1)}{2} + n-m.
\]
If $l<m$, then
\[
\text{dim}( \mathbb{R}^{l} \times \mathcal{O}(n) \times \cd(n-m))  <  \text{dim}( \SRn).
\]
Therefore, the nonlinear equation $H(\mathbf{c}, Q, \Lambda) ={\bf 0}_{n\times n}$ is an over-determined matrix equation defined on  the product manifold $\mathbb{R}^{l} \times \mathcal{O}(n) \times \cd(n-m)$.

Notice that  $\mathbb{R}^{l} \times \mathcal{O}(n) \times \cd(n-m)$ is an embedded submanifold of $\mathbb{R}^{l} \times \Rnn \times \cd(n-m)$.  One may equip $\mathbb{R}^{l} \times \mathcal{O}(n) \times \cd(n-m)$ with the induced Riemannian metric:
\[
g_{(\mathbf{c},Q,\Lambda)}\big( (\xi_1, \eta_1, \tau_1), (\xi_2, \eta_2, \tau_2) \big):=
\tr(\xi_1^T\xi_2)+\tr(\eta_1^T\eta_2)+\tr(\tau_1^T\tau_2),
\]
for any $(\mathbf{c},Q,\Lambda) \in \mathbb{R}^{l} \times \mathcal{O}(n) \times \cd(n-m)$, and $(\xi_1, \eta_1, \tau_1), (\xi_2, \eta_2, \tau_2)\in  T_{(\mathbf{c},Q,\Lambda)}\mathbb{R}^{l} \times \mathcal{O}(n) \times \cd(n-m)$, and its induced norm $\|\cdot\|$.
The tangent space $T_{(\mathbf{c},Q,\Lambda)}\mathbb{R}^{l} \times \mathcal{O}(n) \times \cd(n-m)$ of $\mathbb{R}^{l} \times \mathcal{O}(n) \times \cd(n-m)$ at $(\mathbf{c},Q,\Lambda) \in \mathbb{R}^{l} \times \mathcal{O}(n) \times \cd(n-m)$, which is given by \cite[p.42]{AMS08}
\begin{eqnarray*}
&&T_{(\mathbf{c},Q,\Lambda)}\mathbb{R}^{l} \times \mathcal{O}(n) \times \cd(n-m) \\
&=& \big\{(\mathbf{r}, Q\Omega, U) \ | \ \Omega^T = -\Omega,
\;\mathbf{r}\in \mathbb{R}^{l}, \Omega \in \Rnn, U\in \cd(n-m)\big\}.
\end{eqnarray*}
Hence, $( \mathbb{R}^{l} \times \mathcal{O}(n) \times \cd(n-m),g)$ is a Riemannian product manifold.

A Riemannian Gauss-Newton method  for solving  Problem (\ref{LSIEP})
can be stated as follows. Given the current iterate $X_k:=(\mathbf{c}_k,Q_k,\La_k)\in \mathbb{R}^{l} \times \mathcal{O}(n) \times \cd(n-m)$, solve the normal equation
\BE\label{eq:SNMNEW1}
\begin{array}{l}
(\mathrm{D}H(X_k))^*\circ \mathrm{D}H(X_k)[\Delta X_k] = -(\mathrm{D}H(X_k))^*[H(X_k)],
\end{array}
\EE
for $\Delta X_k:=(\Delta \mathbf{c}_k, \Delta Q_k, \Delta \Lambda_k) \in T_{X_k}\mathbb{R}^{l} \times \mathcal{O}(n) \times \cd(n-m)$. Here,
\[
\mathrm{D}H(X_k): T_{(\mathbf{c},Q,\Lambda)}\mathbb{R}^{l} \times \mathcal{O}(n) \times \cd(n-m) \to T_{H(X_k)} \SRn
 \]
is the Riemannian differential of $H$ at the point $X_k$, which is given by
\BE\label{eq:LSIEPDF1}
\mathrm{D}H(X_k) [\Delta X_k]=(A(\Delta \mathbf{c}_k)-A_0) + [ Q_k\overline{\Lambda}_k Q_k^T ,\Delta Q_kQ_k^T ]
-(Q_kP)\Delta \Lambda_k (Q_kP)^T,
\EE
where
\BE\label{def:LAMP}
\overline{\Lambda}_k :={\rm blkdiag} (\Lambda^*_m \;,\Lambda_k) \quad \mbox{and} \quad P :=[\mathbf{0}_{ (n-m)\times m}, I_{n-m}]^T.
\EE
With respect to the  Riemannian metric $g$, the adjoint $(\mathrm{D}H(X_k))^* : T_{H(X_k)} \SRn$ $\to
T_{X_k}\mathbb{R}^{l} \times \mathcal{O}(n) \times \cd(n-m)$ of $\mathrm{D}H(X_k)$ is given by
\BE\label{eq:LSIEPDF2}
(\mathrm{D}H(X_k))^*[\Delta Z_k]=\Big(\mathbf{v}(\Delta Z_k), [ Q_k\overline{\Lambda}_k Q_k^T ,\Delta Z_k ]Q_k,
 -{\rm Diag}\big((Q_kP)^T\Delta Z_k (Q_kP)\big)\Big),
\EE
where
\BE\label{df:vec}
\mathbf{v}(\Delta Z):= \big(\tr(A_1^T \Delta Z), \tr(A_2^T \Delta Z), \ldots, \tr(A_l^T \Delta Z) \big)^T.
\EE

In addition, the Riemannian gradient of $h$ at  a point  $X:=(\mathbf{c},Q,\Lambda)\in\mathbb{R}^{l} \times \mathcal{O}(n) \times \cd(n-m)$ has the following form \cite[p.185]{AMS08}:
\begin{eqnarray}\label{RGR}
&&\mathrm{grad}\;h(X) = (\mathrm{D}H(X))^* [ H(X)]  \nonumber\\
&=&(\mathbf{v} A(\mathbf{c}) - Q\overline{\Lambda} Q^T), [Q\overline{\Lambda} Q^T,
A(\mathbf{c}) - Q\overline{\Lambda} Q^T]Q,   \nonumber\\
&& -{\rm Diag}((QP)^T (A(\mathbf{c}) - Q\overline{\Lambda} Q^T) (QP)) ).
\end{eqnarray}
Let $\nabla$ dentoe the Riemannian connection of  $\mathbb{R}^{l} \times \mathcal{O}(n) \times \cd(n-m)$.
By using  $(8.31)$ in \cite[p.185]{AMS08} we obtain
\BE\label{eq:TTT}
\begin{array}{c}
\nabla^2 h(X)[\xi_X,\eta_X] =\langle{\rm D} H(X)[\xi_X], {\rm D}H(X)[\eta_X]\rangle + \langle H(X),\nabla^2 H(X)[\xi_X,\eta_X]\rangle,
\end{array}
\EE
for all
$X:= (\mathbf{c},Q,\Lambda) \in \mathbb{R}^{l} \times \mathcal{O}(n) \times \cd(n-m)$ and
$\xi_X, \eta_X \in T_{X}\mathbb{R}^{l} \times \mathcal{O}(n) \times \cd(n-m)$,
where $\nabla^2 h$ is a $(0,2)$-tensor field and $\nabla^2 H(X)[\cdot,\cdot]=\big[\nabla^2 H_{ij}(X)[\cdot,\cdot] \big]\in\mathbb{SR}^{n\times n}$ \cite[p.109]{AMS08}. The Riemannian Hessian ${\rm Hess}\; h(X)$ at a point $X\in \mathbb{R}^{l} \times \mathcal{O}(n) \times \cd(n-m)$ is determined by \BE\label{eq:TTT2}
\nabla^2 h(X)[\xi_X,\eta_X] =\langle{\rm Hess}\; h(X)[\xi_X], \eta_X \rangle,
\EE
for all $\xi_X, \eta_X \in T_{X}\mathbb{R}^{l} \times \mathcal{O}(n) \times \cd(n-m)$. In particular, if $X_{*}$ is a solution of the equation $H(X)=\mathbf{0}_{n\times n}$, then we can obtain
\BE\label{eq:HDF}
{\rm Hess}\; h(X_{*}) = ({\rm D} H(X_{*}))^*\circ {\rm D} H(X_{*}).
\EE

Based on the above discussion, a Riemannian inexact Gauss-Newton method for solving Problem (\ref{LSIEP}) can be described as follows.

\begin{algorithm} \label{nm1}
{\rm (A Riemannian inexact Gauss-Newton method)}
\begin{description}
\item [{Step 0.}] Choose an initial point $ X_0 \in \mathbb{R}^{l} \times \mathcal{O}(n) \times \cd(n-m)$, $\beta, \eta_{\max}\in (0,1)$, $\sigma \in (0,\frac{1}{2})$. Let $k:=0$.

\item [{Step 1.}] Apply the CG method to finding an approximate solution $\Delta X_k \in T_{X_k} \mathbb{R}^{l} \times \mathcal{O}(n) \times \cd(n-m)$ of
    \BE\label{eq:le}
    (\mathrm{D}H(X_k))^*\circ \mathrm{D}H(X_k)[\Delta X_k] = -  {\rm grad}\,h(X_k)
    \EE
    such that
    \BE\label{eq:tol1}
    \|(\mathrm{D}H(X_k))^*\circ \mathrm{D}H(X_k)[\Delta X_k] + (\mathrm{D}H(X_k))^*[H(X_k)]\|\leq  \eta_k\| {\rm grad}\,h(X_k)\|
    \EE
    and
    \BE\label{eq:tol2}
    \langle {\rm grad}\,h(X_k) ,\Delta X_k \rangle \le -\eta_k\langle \Delta X_k ,\Delta X_k\rangle,
    \EE
    where $\eta_k:=\min\{\eta_{\max}, \| {\rm grad}\,h(X_k)\|\}$. If {\rm (\ref{eq:tol1})} and {\rm (\ref{eq:tol2})} are not attainable, then let
    \[
    \Delta X_k:=-\grad h(X_k).
    \]

\item [{Step 2.}] Let $l_k$ be the smallest nonnegative integer $l$ such that
         \BE\label{DES}
         h\big(R_{X_k}(\beta^{l} \Delta X_k)\big)- h( X_k)\le \sigma\beta^l \langle {\rm grad}\,h(X_k),\Delta X_k\rangle.
         \EE
         Set
         \[
         X_{k+1}:= R_{X_k}(\beta^{l_k} \Delta X_k).
         \]
\item [{Step 3.}] Replace $k$ by $k+1$ and go to {\rm \textbf{Step 1}}.
\end{description}
\end{algorithm}

We point out that, in \textbf{Step 2} of Algorithm \ref{nm1}, $R$ is a retraction on  $\mathbb{R}^{l} \times \mathcal{O}(n) \times \cd(n-m)$,
which takes the form of
\BE\label{eq:retraction1}
R_{X_k}(\Delta X_k) = \big(\mathbf{c}_k+\Delta {\mathbf{c}_k}, R^o_{Q_k}( \Delta Q_k),
\Lambda_k +\Delta\Lambda_k \big),
\EE
where $R^o$ is a retraction on $\mathcal{O}(n)$, which may be chosen as \cite[p.58]{AMS08}:
\[
R^o_Q(\eta_Q) =  {\rm qf} (Q + \eta_Q),  \quad  \eta_Q \in  T_Q\mathcal{O}(n).
\]
Here, ${\rm qf}(A)$ denotes the $Q$ factor of the QR decomposition of an invertible matrix $A\in \mathbb{R}^{n\times n}$ as $A=Q\widehat{R}$,
where $Q$ belongs to $\mathcal{O}(n)$ and $\widehat{R}$ is an upper triangular matrix with strictly positive diagonal elements.
For the retraction $R$ defined by (\ref{eq:retraction1}), there exist two scalars $\nu >0$ and $\mu_{\nu} >0$ such that \cite[p.149]{AMS08}
\BE\label{retr:bd}
\nu\| \Delta X \| \geq  {\rm dist}\big(X,R_X (\Delta X) \big),
\EE
for all $X\in \mathbb{R}^{l} \times \mathcal{O}(n) \times \cd(n-m)$ and
\BE\label{dx:nu}
\Delta X \in T_X\mathbb{R}^{l} \times \mathcal{O}(n) \times \cd(n-m)\quad \mbox{with}\quad
\| \Delta X  \|\leq \mu_{\nu},
\EE
where ``$\dist$" means the Riemannian distance on the Riemannian product manifold $( \mathbb{R}^{l} \times \mathcal{O}(n) \times \cd(n-m),g)$ \cite[p.46]{AMS08}.
Of course, one may choose other retractions on $\mathcal{O}(n)$ via polar decomposition, Givens rotation, Cayley transform,
exponential mapping, or singular value decomposition (see for instance \cite[p.58]{AMS08} and \cite{ZBJ15}).

%------------------------------
\section{Convergence Analysis}\label{sec3}
In this section, we establish the global convergence and local convergence rate of Algorithm \ref{nm1}.

%------------------------------
\subsection{Global Convergence}

For the  global convergence of Algorithm \ref{nm1}, we have the following result.
The proof follows from Theorem 4.1 in \cite{ZBJ15}. Thus we omit it here.

%--------------------------
\begin{theorem} \label{thm:gc}
Any accumulation point $X_*$ of the sequence $\{X_k\}$ generated by Algorithm {\rm \ref{nm1}}
is a stationary point of the cost function $h$ defined in  Problem {\rm (\ref{LSIEP})}.
\end{theorem}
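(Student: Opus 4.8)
The plan is to reduce the statement to the general convergence theory for Riemannian line-search methods with Armijo backtracking, since Algorithm~\ref{nm1} fits that template once one checks the standard hypotheses; indeed the authors note the proof mirrors Theorem~4.1 in \cite{ZBJ15}. First I would observe that the cost function $h$ in Problem~(\ref{LSIEP}) is smooth on the product manifold $\mathbb{R}^{l}\times\mathcal{O}(n)\times\cd(n-m)$, being a composition of the polynomial map $H$ with the squared Frobenius norm, and that the retraction $R$ defined in (\ref{eq:retraction1}) is a genuine (first-order) retraction with the local Lipschitz-type bound (\ref{retr:bd})--(\ref{dx:nu}) available from \cite[p.149]{AMS08}. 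The key structural fact is that the search direction $\Delta X_k$ produced in \textbf{Step~1} is always a descent direction: either the inexactness conditions (\ref{eq:tol1})--(\ref{eq:tol2}) hold, in which case (\ref{eq:tol2}) gives $\langle\grad h(X_k),\Delta X_k\rangle\le-\eta_k\|\Delta X_k\|^2<0$, or else the fallback $\Delta X_k=-\grad h(X_k)$ is used, which is trivially a descent direction. Hence the Armijo condition (\ref{DES}) is satisfiable for all sufficiently large $l$ and $l_k$ is well defined, so the algorithm is well posed and $\{h(X_k)\}$ is monotonically decreasing.

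Next I would argue by contradiction: suppose $X_*$ is an accumulation point with $\grad h(X_*)\neq 0$, and pass to a subsequence $X_{k}\to X_*$ along $k\in\mathcal{K}$. Since $h$ is continuous and $\{h(X_k)\}$ is nonincreasing and bounded below (by $0$), the whole sequence $h(X_k)$ converges, so $h(X_k)-h(X_{k+1})\to 0$. Combining this with the Armijo inequality (\ref{DES}) yields $\beta^{l_k}\langle\grad h(X_k),\Delta X_k\rangle\to 0$. The crux is then to show this forces $\grad h(X_*)=0$. One splits into two cases according to whether $\inf_{k\in\mathcal{K}}\beta^{l_k}>0$ or $\liminf_{k\in\mathcal{K}}\beta^{l_k}=0$. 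In the first case, $\langle\grad h(X_k),\Delta X_k\rangle\to 0$ along $\mathcal{K}$; using (\ref{eq:tol2}) to bound $\|\Delta X_k\|$ in terms of $\langle\grad h(X_k),\Delta X_k\rangle$ together with a uniform bound on the operator $(\mathrm{D}H(X_k))^*\circ\mathrm{D}H(X_k)$ near $X_*$ (from continuity of $H$ and its derivative, equations (\ref{eq:LSIEPDF1})--(\ref{eq:LSIEPDF2})), and invoking (\ref{eq:tol1}), one deduces $\grad h(X_k)\to 0$ along $\mathcal{K}$, hence $\grad h(X_*)=0$. In the second case one uses the minimality of $l_k$: the step $\beta^{l_k-1}\Delta X_k$ fails (\ref{DES}), and a Taylor expansion of $h\circ R_{X_k}$ along $\Delta X_k/\|\Delta X_k\|$ (a unit tangent vector lying in a compact set) combined with the uniform bounds on the retraction and on $\mathrm{D}H$ produces the contradiction $\sigma\ge 1$ or $\grad h(X_*)=0$, using again that $\Delta X_k$ is uniformly gradient-related via (\ref{eq:tol1})--(\ref{eq:tol2}).

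The main obstacle, and the step deserving the most care, is the verification that the search directions $\{\Delta X_k\}$ are \emph{gradient-related} in the sense required by the standard global convergence theorem \cite[Theorem~4.3.1]{AMS08}: namely, for any subsequence $X_k\to X_*$ with $\grad h(X_*)\neq 0$, one needs $\limsup_k\langle\grad h(X_k),\Delta X_k\rangle<0$ and $\{\Delta X_k\}$ bounded on that subsequence. This is where (\ref{eq:tol1}) and (\ref{eq:tol2}) must be used jointly: (\ref{eq:tol2}) gives the angle/length control $\langle\grad h(X_k),\Delta X_k\rangle\le-\eta_k\|\Delta X_k\|^2$, while (\ref{eq:tol1}) bounds the residual so that $\|\Delta X_k\|$ cannot collapse relative to $\|\grad h(X_k)\|$; near the accumulation point $\eta_k=\min\{\eta_{\max},\|\grad h(X_k)\|\}$ is bounded away from $0$ precisely because we assumed $\grad h(X_*)\neq 0$, which is what makes the inexact scheme behave like an exact gradient-related one. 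Once gradient-relatedness is in hand, the conclusion follows verbatim from the Riemannian Armijo convergence theorem, so in the write-up I would state the gradient-related property as the one nontrivial lemma and then cite \cite[Theorem~4.3.1]{AMS08} (equivalently Theorem~4.1 of \cite{ZBJ15}) to finish.
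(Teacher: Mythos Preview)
Your proposal is correct and matches the paper's approach exactly: the paper omits the proof entirely and simply cites Theorem~4.1 of \cite{ZBJ15}, which is precisely the Riemannian Armijo convergence result you reduce to after verifying that the directions produced in \textbf{Step~1} are gradient-related via (\ref{eq:tol1})--(\ref{eq:tol2}) and the fallback choice. Your identification of gradient-relatedness as the one substantive check, and your use of (\ref{eq:tol2}) together with Cauchy--Schwarz to bound $\|\Delta X_k\|$ from above and (\ref{eq:tol1}) with a local operator-norm bound to bound it from below, is exactly what underlies that citation.
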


The search directions $\{\Delta X_k\}$ generated by Algorithm {\rm \ref{nm1}} have the following property.
\begin{lemma}\label{pro:ctr}
Let $X_*$ be an accumulation point of the sequence $\{X_k\}$ generated by Algorithm {\rm \ref{nm1}}.
If ${\rm D}H(X_{*}): T_{X_*}\mathbb{R}^{l} \times \mathcal{O}(n) \times \cd(n-m)\to  T_{H(X_*)}\SRn$
is surjective, then there exist three constants $\bar{\rho}, d_1, d_2 > 0$ such that
for all $X_k\in B_{\bar{\rho}}(X_{*})$,
\[
d_1\, \|  {\rm grad}\,h(X_k) \| \leq \| \Delta X_k \| \leq  d_2\, \| {\rm grad}\,h(X_k)\|,
\]
where $B_{\bar{\rho}}(X_{*}):=\{X\in\mathbb{R}^{l} \times \mathcal{O}(n) \times \cd(n-m)\ | \ \dist(X,X_*)<\bar{\rho}\}$.
\end{lemma}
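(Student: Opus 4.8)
The plan is to exploit the surjectivity of $\mathrm{D}H(X_*)$ to control the inexact Gauss-Newton direction produced in \textbf{Step 1} by the norm of the Riemannian gradient, uniformly on a small neighborhood of $X_*$. The upper bound follows almost directly from the descent condition (\ref{eq:tol2}): by Cauchy-Schwarz, $\|\grad h(X_k)\|\,\|\Delta X_k\| \ge \langle \grad h(X_k),\Delta X_k\rangle \le -\eta_k\|\Delta X_k\|^2$ is not quite the right chaining, so instead I would argue that (\ref{eq:tol1}) gives $\|(\mathrm{D}H(X_k))^*\circ\mathrm{D}H(X_k)[\Delta X_k]\| \le (1+\eta_k)\|\grad h(X_k)\|$, and since $\mathrm{D}H(X_*)$ is surjective, $(\mathrm{D}H(X_*))^*\circ\mathrm{D}H(X_*)$ is a positive-definite operator on $T_{X_*}\mathcal{M}$; by continuity of $X\mapsto (\mathrm{D}H(X))^*\circ\mathrm{D}H(X)$ and of the manifold structure there is a radius $\bar\rho$ on which this operator has smallest eigenvalue bounded below by some $c>0$, giving $\|\Delta X_k\| \le c^{-1}(1+\eta_{\max})\|\grad h(X_k)\| =: d_2\|\grad h(X_k)\|$. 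One subtlety: $\Delta X_k$ lives in the tangent space at $X_k$, not at $X_*$, so the uniform lower bound on the eigenvalue must be phrased in terms of the varying inner product $g_{X_k}$ — this is handled by a standard compactness/continuity argument on a relatively compact neighborhood, or by transporting to $T_{X_*}\mathcal{M}$.

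For the lower bound, I would first dispatch the exceptional case: if (\ref{eq:tol1})--(\ref{eq:tol2}) fail and $\Delta X_k = -\grad h(X_k)$, the bound $\|\Delta X_k\| = \|\grad h(X_k)\|$ holds trivially with $d_1 = 1$. In the generic case, $\Delta X_k$ satisfies the normal equation up to a residual of size $\eta_k\|\grad h(X_k)\|$, so applying the operator and using its boundedness on the neighborhood: $\|\grad h(X_k)\| = \|(\mathrm{D}H(X_k))^*[H(X_k)]\| \le \|(\mathrm{D}H(X_k))^*\circ\mathrm{D}H(X_k)[\Delta X_k]\| + \eta_k\|\grad h(X_k)\|$, hence $(1-\eta_k)\|\grad h(X_k)\| \le \normmm{(\mathrm{D}H(X_k))^*\circ\mathrm{D}H(X_k)}\,\|\Delta X_k\|$. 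Since $\eta_k \le \|\grad h(X_k)\|$ can be made $\le 1/2$ by shrinking $\bar\rho$ (as $\grad h(X_*) = 0$ and $\grad h$ is continuous), and the operator norm is bounded above by some $C$ on the neighborhood, this yields $\|\Delta X_k\| \ge (2C)^{-1}\|\grad h(X_k)\| =: d_1\|\grad h(X_k)\|$. Taking $d_1 = \min\{1, (2C)^{-1}\}$ covers both cases, and shrinking $\bar\rho$ once more ensures $\eta_{\max}$ does not interfere.

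The main obstacle is the bookkeeping around the moving base point: the operators $\mathrm{D}H(X_k)$, their adjoints, and the norms are all defined with respect to $T_{X_k}\mathcal{M}$ and $g_{X_k}$, so "continuity" and "uniform spectral bounds" must be justified in a chart (or via a smooth family of isometries/parallel transports to $T_{X_*}\mathcal{M}$) rather than naively. Concretely I would fix a coordinate chart around $X_*$, express $(\mathrm{D}H(\cdot))^*\circ\mathrm{D}H(\cdot)$ as a smooth matrix-valued map, invoke that it is positive definite at $X_*$ (which is exactly (\ref{eq:HDF}) together with surjectivity of $\mathrm{D}H(X_*)$), and then use continuity of eigenvalues to get uniform upper and lower spectral bounds on a possibly smaller $B_{\bar\rho}(X_*)$. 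Everything else — Cauchy-Schwarz, the triangle inequality on the residual, and handling the fallback direction — is routine once that uniform conditioning is in hand.
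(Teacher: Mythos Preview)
Your proposal is correct and follows essentially the same route as the paper: both arguments hinge on the residual bound (\ref{eq:tol1}) together with uniform two-sided spectral control of $(\mathrm{D}H(X_k))^*\circ\mathrm{D}H(X_k)$ near $X_*$, which the paper records as bounds $\kappa_0$ on the operator norm and $\kappa_1$ on the inverse-operator norm, yielding $d_2=\kappa_1(1+\eta_{\max})$ and $d_1=(1-\eta_{\max})/\kappa_0$.

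Two minor remarks. First, your detour of shrinking $\bar\rho$ so that $\eta_k\le\|\grad h(X_k)\|\le\tfrac12$ is unnecessary: by Step~0 one already has $\eta_k\le\eta_{\max}<1$, which is all that is needed for $(1-\eta_k)\|\grad h(X_k)\|\le C\|\Delta X_k\|$ to give a uniform lower bound. The paper uses exactly this. Second, you explicitly treat the fallback direction $\Delta X_k=-\grad h(X_k)$ and the moving-tangent-space bookkeeping, both of which the paper passes over in silence; your handling of these points is sound and arguably more careful than the original.
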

\begin{proof}
Since ${\rm D}H(X_{*})$ is surjective by hypothesis and $H$ is continuously differentiable,
there exist three positive scalars $\bar{\rho},\kappa_0,\kappa_1>0$ such that for all $X_k\in B_{\bar{\rho}}(X_{*})$,
$({\rm D}H(X_k))^*\circ {\rm D}H(X_k)$ is nonsingular, and
\BE\label{hinv:bd}
\normmm{({\rm D}H(X_k))^*\circ {\rm D}H(X_k)} \le \kappa_0,\qquad
\normmm{\big(({\rm D}H(X_k))^*\circ {\rm D}H(X_k)\big)^{-1}} \le \kappa_1.
\EE
Let
\[
T(X_k) := ({\rm D}H(X_k))^*\circ {\rm D}H(X_k)[ \Delta X_k ] +  {\rm grad}\,h(X_k).
\]
It follows from (\ref{eq:tol1}), (\ref{eq:tol2}), and (\ref{hinv:bd}) that for all $X_k\in B_{\bar{\rho}}(X_{*})$,
\begin{eqnarray*}%\label{est:dxj}
\|\Delta X_k
\| &=& \| \big(({\rm D}H(X_k))^*\circ {\rm D}H(X_k)\big)^{-1}
\big[ ({\rm D}H(X_k))^*\circ {\rm D}H(X_k)[ \Delta X_k ] \big]\| \\
&=& \| \big(({\rm D}H(X_k))^*\circ {\rm D}H(X_k)\big)^{-1}
\big[ T(X_k) -  {\rm grad}\,h(X_k)\big] \|\\
&\leq& \normmm{\big(({\rm D}H(X_k))^*\circ {\rm D}H(X_k)\big)^{-1}}\big( \| T(X_k) \| + \| {\rm grad}\,h(X_k)\|\big)\\
&\le& \kappa_1 (1+\eta_k)\|  {\rm grad}\,h(X_k)\| \le \kappa_1 (1+\eta_{\max})\|  {\rm grad}\,h(X_k)\|\\
&\equiv& d_2\|  {\rm grad}\,h(X_k)\|
\end{eqnarray*}
and
\begin{eqnarray*}
& &\|  {\rm grad}\,h(X_k) \|\\
&=& \| T(X_k) - ({\rm D}H(X_k))^*\circ {\rm D}H(X_k)[ \Delta X_{k_j}
 ]\|\\
&\le& \|  T(X_k) \| + \| ({\rm D}H(X_k))^*\circ {\rm D}H(X_k)[ \Delta X_k ]\| \nonumber \\
&\leq &  \eta_k \|   {\rm grad}\,h(X_k)  \| + \|  ({\rm D}H(X_k))^*\circ {\rm D}H(X_k)\|\cdot\| \Delta X_k\|\nonumber\\
&\leq &  \eta_{\max} \|   {\rm grad}\,h(X_k)  \| + \kappa_0 \|  \Delta X_k  \|,
\end{eqnarray*}
which implies that for all $X_k\in B_{\bar{\rho}}(X_{*})$,
\[
\|  \Delta X_k  \|\ge \frac{1-\eta_{\max}}{\kappa_0}\|  {\rm grad}\,h(X_k)  \|
\equiv  d_1\|  {\rm grad}\,h(X_k)  \|.
\]
This completes the proof.
\end{proof}

%------------------
For the local convergence of Algorithm \ref{nm1} related to an isolated local minima of $h$, we have the following result.
The proof follows from \cite[Proposition 1.2.5]{B03}.
\begin{lemma} \label{lem:nons}
Let $X_{*}$ be an accumulation point of the sequence $\{X_{k}\}$ generated by Algorithm {\rm \ref{nm1}}.
If ${\rm D}H(X_{*}): T_{X_*}\mathbb{R}^{l} \times \mathcal{O}(n) \times \cd(n-m)\to  T_{H(X_*)}\SRn$
is surjective and $X_*$ is an isolated local minimizer of $h$,
then the sequence $\{X_{k}\}$ converges to $X_{*}$.
\end{lemma}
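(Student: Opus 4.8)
The plan is to combine Lemma \ref{pro:ctr} with the classical argument for convergence of gradient-related descent methods to an isolated limit point, as in \cite[Proposition 1.2.5]{B03}, carrying it over to the Riemannian setting. The starting observations are: (i) by Theorem \ref{thm:gc} any accumulation point of $\{X_k\}$ is a stationary point of $h$; (ii) since $X_*$ is an accumulation point, there is a subsequence $\{X_{k_j}\}$ with $X_{k_j}\to X_*$; and (iii) because $\{h(X_k)\}$ is monotonically nonincreasing (the Armijo condition \eqref{DES} forces a strict decrease unless $\grad h(X_k)=0$) and $h(X_{k_j})\to h(X_*)$, the whole sequence satisfies $h(X_k)\to h(X_*)$. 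The goal is to upgrade the subsequential convergence $X_{k_j}\to X_*$ to $X_k\to X_*$.

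The core of the argument is a "trapping" estimate. First I would use Lemma \ref{pro:ctr}: pick $\bar\rho>0$ and $d_1,d_2>0$ as there, so that on the ball $B_{\bar\rho}(X_*)$ the step length is comparable to the gradient norm, $\|\Delta X_k\|\le d_2\|\grad h(X_k)\|$. Next I would quantify how much $X_{k+1}$ can move away from $X_k$: by \eqref{retr:bd}--\eqref{dx:nu}, whenever $\|\beta^{l_k}\Delta X_k\|\le\mu_\nu$ we have $\dist(X_k,X_{k+1})\le \nu\|\beta^{l_k}\Delta X_k\|\le \nu\|\Delta X_k\|\le \nu d_2\|\grad h(X_k)\|$. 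The key point is that since $X_*$ is an isolated local minimizer, after shrinking $\bar\rho$ we may assume $X_*$ is the only stationary point of $h$ in the closed ball $\overline{B_{\bar\rho}(X_*)}$ and that $h(X)>h(X_*)$ for all $X\in \overline{B_{\bar\rho}(X_*)}\setminus\{X_*\}$. Choose any $r\in(0,\bar\rho)$. I claim $\inf\{\|\grad h(X)\| : r\le \dist(X,X_*)\le \bar\rho\}=:\delta>0$: if not, a sequence in this compact annulus would have gradient tending to zero, hence (by continuity of $\grad h$ and compactness) a stationary point in the annulus, contradicting isolation. Now if $X_k$ lies in the inner ball $B_{\rho'}(X_*)$ with $\rho'$ chosen small enough that $\nu d_2\|\grad h(X_k)\| < r - \rho'$ there — which is possible because $\grad h$ is continuous and vanishes at $X_*$ — then $X_{k+1}$ stays inside $B_r(X_*)$, and inductively the tail of the sequence never leaves $B_r(X_*)$ once it enters $B_{\rho'}(X_*)$.

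To finish, I would combine this trapping with the fact that $\grad h(X_k)\to 0$ along the whole sequence. Indeed, the standard consequence of the Armijo rule together with the gradient-relatedness conditions \eqref{eq:tol1}--\eqref{eq:tol2} (this is exactly what underlies Theorem \ref{thm:gc}) is that $\liminf_k \|\grad h(X_k)\|=0$; combined with $h(X_k)\downarrow h(X_*)$ and the annulus estimate $\delta>0$, one shows the iterates cannot oscillate between the inner ball and the annulus infinitely often without violating the monotone decrease of $h$ by a fixed amount each crossing. Hence there is an index $K$ with $\dist(X_K,X_*)<\rho'$, the trapping argument applies for all $k\ge K$, so $X_k\in B_r(X_*)$ for all $k\ge K$. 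Since $r\in(0,\bar\rho)$ was arbitrary and every accumulation point of the (now bounded) tail is stationary, hence equal to $X_*$ by isolation, we conclude $X_k\to X_*$.

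The main obstacle I expect is the bookkeeping that prevents the iterates from repeatedly re-entering and leaving a neighborhood of $X_*$: one must carefully use the strict Armijo decrease, the lower bound $\delta$ on $\|\grad h\|$ in the annulus, and Lemma \ref{pro:ctr} to show each "escape attempt" would cost a fixed decrement of $h$, which is impossible since $h$ is bounded below by $h(X_*)$ along the sequence. The Riemannian subtleties — using $\dist$ instead of a norm and invoking \eqref{retr:bd}--\eqref{dx:nu} to control $\dist(X_k,X_{k+1})$, which only holds for sufficiently small steps — are handled by noting that near $X_*$ the gradient, and hence (by Lemma \ref{pro:ctr}) the step $\Delta X_k$, is small, so the smallness hypothesis \eqref{dx:nu} is automatically satisfied; this is precisely why Lemma \ref{pro:ctr} is the right tool and why surjectivity of ${\rm D}H(X_*)$ is needed.
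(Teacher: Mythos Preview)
Your overall strategy is exactly the paper's: adapt the capture argument of \cite[Proposition 1.2.5]{B03} to the manifold setting via Lemma~\ref{pro:ctr} and the retraction bound \eqref{retr:bd}--\eqref{dx:nu}, and the step estimate $\dist(X_{k+1},X_k)\le \nu\,\|\Delta X_k\|\le \nu d_2\|\grad h(X_k)\|$ is precisely what the paper uses.

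The gap is in your trapping step. From ``$X_k\in B_{\rho'}(X_*)\Rightarrow X_{k+1}\in B_r(X_*)$'' you cannot ``inductively'' conclude the tail never leaves $B_r(X_*)$: if $X_{k+1}\in B_r(X_*)\setminus B_{\rho'}(X_*)$ you have no control on $X_{k+2}$, since your choice of $\rho'$ only bounds the step when the iterate is inside $B_{\rho'}(X_*)$. Your proposed patch---each passage through the annulus costs a fixed $h$-decrement---would require a uniform \emph{lower} bound on the Armijo stepsize $\beta^{l_k}$ over the annulus (otherwise the decrease per step $\sigma\beta^{l_k}\eta_k\|\Delta X_k\|^2$ need not be bounded away from zero), and you do not establish this.

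The paper avoids this difficulty by a level-set device rather than a gradient-annulus bound. Set
\[
\phi(t):=\min_{\{X\,:\,t\le\dist(X,X_*)\le\rho\}}\big(h(X)-h(X_*)\big),\qquad t\in[0,\rho],
\]
which is nondecreasing and strictly positive on $(0,\rho]$ by the isolated-minimizer hypothesis, and trap in
\[
S:=\{X:\dist(X,X_*)<\epsilon,\ h(X)<h(X_*)+\phi(r)\},
\]
with $r\le\epsilon$ chosen so that $\dist(X,X_*)<r$ implies $\dist(X,X_*)+\nu d_2\|\grad h(X)\|<\epsilon$. The $h$-constraint in $S$ forces $\phi(\dist(X_k,X_*))\le h(X_k)-h(X_*)<\phi(r)$, hence $\dist(X_k,X_*)<r$ by monotonicity of $\phi$; then the step estimate gives $\dist(X_{k+1},X_*)<\epsilon$, while $h(X_{k+1})\le h(X_k)$ preserves the $h$-constraint. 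Thus $X_k\in S\Rightarrow X_{k+1}\in S$ holds with no stepsize lower bound. Once the subsequence enters $S$, the whole tail stays in $S$; then $h(X_k)\downarrow h(X_*)$ and the inequality $\phi(\dist(X_k,X_*))\le h(X_k)-h(X_*)\to 0$ force $\dist(X_k,X_*)\to 0$. This replaces your oscillation bookkeeping with a single clean invariance.
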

\begin{proof}
By assumption, there exists a parameter $\hat{\rho}>0$ such that $X_{*}$ is the only stationary point of $h$ in the neighborhood $B_{\hat{\rho}}(X_{*})$ and
\BE\label{minvalueofh}
h(X) > h(X_*), \quad \forall X\neq X_*,\quad  X \in B_{\hat{\rho}}(X_{*}).
\EE
Since $h$ is continuously differentiable and $X_*$ is a stationary point of $h$, i.e., ${\rm grad}\; h(X_{*}) $ $= 0_{X_*}$,
we can obtain
\BE\label{eq:ggnorm}
\lim_{{\rm dist}(X,X_*) \to 0} {\rm grad}\, h(X) = 0_{X_*}.
\EE
From (\ref{minvalueofh}) and (\ref{eq:ggnorm}), there exists a positive scalar $0<\rho < \min\{\hat{\rho},\bar{\rho}\}$ such that
\BE\label{eq:sss}
h\big (X_{*} \big) \le h\big( X \big) \quad \mbox{and} \quad \|\grad h(X)\| < \mu_{\nu}, \quad \forall X\in B_{\rho}(X_{*}),
\EE
where $\mu_{\nu}$ is defined in (\ref{dx:nu}) and $\bar{\rho}$ is defined in Lemma \ref{pro:ctr}.

Let
\[
\phi(t) := \min\limits_{ \{X | t\leq  {\rm dist}(X,X_{*}) \leq \rho \} } \big\{ h(X) - h(X_{*}) \big\}, \quad \forall t\in [0,\rho].
\]
We note that $\phi$ is a monotonically nondecreasing function of $t$ and thus $\phi(t) >0$ for all $t\in (0,\rho]$.
Using (\ref{eq:ggnorm}), for any $ \epsilon \in (0,\rho]$, there exists a constant $r \in (0,\epsilon]$ such that
\BE\label{WAN}
{\rm dist} (X,X_{*}) < r \quad \Longrightarrow \quad {\rm dist} (X,X_{*}) + \nu d_2 \|{\rm grad}\, h(X)\| < \epsilon,
\EE
where $\nu$ is  defined in (\ref{retr:bd}). Define the open set
\[
S := \big\{  X \ | \ {\rm dist}(X,X_{*}) < \epsilon, \; h(X) < h(X_{*}) + \phi(r)   \big\}.
\]
We claim that if $X_k \in S$ for some $k$, then $X_{k+1} \in S$.
Indeed, by using the definitions of $\phi$ and $S$, if $X_k \in S$, then
\BE\label{spec}
\phi \big( {\rm dist} (X_k,X_{*} ) \big) \leq h(X_k) - h(X_{*}) < \phi(r),
\EE
which implies that ${\rm dist}(X_k,X_{*}) < r$ since $\phi$ is monotonically nondecreasing. From {\rm (\ref{WAN})},
\BE\label{eq:spec1}
{\rm dist} (X_k,X_{*} ) + \nu d_2 \| {\rm grad}\, h(X_k) \| < \epsilon.
\EE
On the other hand,  it follows from Lemma \ref{pro:ctr}, (\ref{retr:bd}), and (\ref{eq:sss}) that
\begin{eqnarray}\label{eq:spec2}
{\rm dist} (X_{k+1},X_{*}  )
&\leq& {\rm dist} (X_k,X_{*} )  +  {\rm dist} \big( X_{k+1} ,X_k \big) \nonumber\\
&=&  {\rm dist} (X_k,X_{*} )  +  {\rm dist} \big( R_{X_k}( \rho^{l_k} \Delta X_k) ,X_k \big) \nonumber\\
&\leq& {\rm dist} (X_k,X_{*} )  + \nu \rho^{l_k} \|  \Delta X_k  \|
\leq {\rm dist} (X_k,X_{*} )  + \nu  \|  \Delta X_k  \| \nonumber\\
&\leq& {\rm dist} (X_k,X_{*} )  + \nu d_2 \|  {\rm grad}\, h(X_k) \|.
\end{eqnarray}
Since $h(X_{k+1}) \leq h(X_k)$, it follows from (\ref{eq:spec1}) and (\ref{eq:spec2}) that
\[
{\rm dist}(X_{k+1},X_{*}) < \epsilon, \quad h(X_{k+1})- h(X_{*}) < \phi(r).
\]
Therefore, we have $X_{k+1} \in S$.

By induction, if $X_{\overline{k}} \in S$ for some $\overline{k}$, then $X_k \in S$ for all $k \geq \overline{k}$.
Since $X_*$ is an accumulation point of the sequence $\{X_k\}$, there exists a subsequence $\{X_{k_j}\}$
such that $\lim\limits_{j\to \infty}X_{k_j}=X_*$.
Then there exists an integer $k_{\bar{l}}$ such that $X_{k_{\bar{l}}}\in S$ and thus $X_k\in S$ for all $k \ge k_{\bar{l}}$.
Since $h(X_{k+1})<h(X_k)$ for all $k \geq k_{\bar{l}}$ and $\lim\limits_{k_j\to \infty}h(X_{k_j}) = h(X_*)$,
we can obtain
\BE\label{fconvergence}
\lim_{k\to \infty}h(X_k) = h(X_*).
\EE
Using (\ref{spec}) and (\ref{fconvergence}) we have $\lim\limits_{k\to \infty}\phi\big( {\rm dist} (X_k,X_{*}) \big)=0$.
Since $\phi$ is monotone nondecreasing, it follows that $\lim\limits_{k\to \infty} {\rm dist} (X_k,X_{*}) =0$ and thus $X_k \rightarrow X_{*}$. The proof is complete.
\end{proof}

%------------------
\subsection{Convergence rate}
In this section, we discuss the local convergence rate of Algorithm \ref{nm1}.
The pullbacks of $H$ and $h$ are defined as $\widehat{H} := H\circ R$ and $\widehat{h} := h\circ R$,
where $R$ is the retraction defined in (\ref{eq:retraction1}).
In addition, we use $\widehat{H}_X := H\circ R_X$ and $\widehat{h}_X := h\circ R_X$ to denote the restrictions of
$\widehat{H}$ and $\widehat{h}$ to the tangent space $T_X\mathbb{R}^{l} \times \mathcal{O}(n) \times \cd(n-m)$.
For the Riemannian gradient of $h$ and the gradient of its pull back $\widehat{h}$,
it holds that \cite[p.56]{AMS08}
\BE\label{GRAA}
{\rm grad}\, h(X) = {\rm grad}\, \widehat{h}_X(0_X),
\EE
for all $X \in \mathbb{R}^{l} \times \mathcal{O}(n) \times \cd(n-m)$.
For the differentials of $H$ and its pull back $\widehat{H}$,
we have
\BE\label{DFA}
{\rm D}H(X) = {\rm D}\widehat{H}_X(0_X),
\EE
for all $X \in \mathbb{R}^{l} \times \mathcal{O}(n) \times \cd(n-m)$.

For the stepsize $\beta^{l_k}$ in (\ref{DES}), we have the following result \cite{RW12}.
\begin{lemma}\label{lem:stepsize}
Let $X_*$ be an accumulation point of the sequence $\{X_k\}$ generated by Algorithm {\rm \ref{nm1}}.
If ${\rm D}H(X_{*}): T_{X_*}\mathbb{R}^{l} \times \mathcal{O}(n) \times \cd(n-m)\to  T_{H(X_*)}\SRn$
is surjective and $\|H(X_*)\|_F$ is sufficiently small, then for $k$ sufficiently large,
$l_k = 0$ satisfies {\rm (\ref{DES})}.
\end{lemma}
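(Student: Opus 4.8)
The plan is to show that, near an accumulation point $X_*$ at which ${\rm D}H(X_*)$ is surjective and $\|H(X_*)\|_F$ is small, the Gauss-Newton direction $\Delta X_k$ produces a sufficient decrease in $\widehat{h}_{X_k}$ that already holds for stepsize $1$, i.e. the Armijo condition (\ref{DES}) is satisfied with $l=0$ once $k$ is large. First I would invoke Theorem \ref{thm:gc} and Lemma \ref{pro:ctr}: $X_*$ is a stationary point, hence ${\rm grad}\,h(X_*)=0_{X_*}$, and on a ball $B_{\bar\rho}(X_*)$ the operator $({\rm D}H(X_k))^*\circ {\rm D}H(X_k)$ is nonsingular with uniform bounds (\ref{hinv:bd}) and the search directions satisfy $d_1\|{\rm grad}\,h(X_k)\|\le\|\Delta X_k\|\le d_2\|{\rm grad}\,h(X_k)\|$. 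Combined with (\ref{eq:tol2}), this gives $\langle {\rm grad}\,h(X_k),\Delta X_k\rangle\le -\eta_k\|\Delta X_k\|^2\le -\eta_k d_1^2\|{\rm grad}\,h(X_k)\|^2$, and also the reverse-type bound $\langle {\rm grad}\,h(X_k),\Delta X_k\rangle\le -c\|\Delta X_k\|^2$ for a constant $c$ controllable in terms of $\kappa_0,\kappa_1$ and $\|H(X_*)\|$; the key point being that when $\|H(X_*)\|$ is small the Gauss-Newton model is a good model of $\widehat h_{X_k}$ near $0_{X_k}$.

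Next I would perform a Taylor expansion of the pullback $\widehat{h}_{X_k}$ along $\Delta X_k$ at $0_{X_k}$. Using (\ref{GRAA}) we have $\widehat{h}_{X_k}(\Delta X_k)-\widehat{h}_{X_k}(0_{X_k}) = \langle {\rm grad}\,h(X_k),\Delta X_k\rangle + \tfrac12\langle {\rm Hess}\,\widehat{h}_{X_k}(t\Delta X_k)[\Delta X_k],\Delta X_k\rangle$ for some $t\in(0,1)$. Since $\widehat{h}=h\circ R$ and $R$ is a second-order retraction-like map, the Hessian of the pullback at $0_{X_*}$ equals the Riemannian Hessian ${\rm Hess}\,h(X_*)$, which by (\ref{eq:HDF}) at a zero $X_*$ of $H$ — or, when $\|H(X_*)\|$ is merely small, a small perturbation thereof — is close to $({\rm D}H(X_*))^*\circ {\rm D}H(X_*)$. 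Because the Gauss-Newton direction nearly solves $({\rm D}H(X_k))^*\circ {\rm D}H(X_k)[\Delta X_k]=-{\rm grad}\,h(X_k)$ up to the tolerance $\eta_k$ in (\ref{eq:tol1}), the quadratic term $\tfrac12\langle {\rm Hess}\,h(X_k)[\Delta X_k],\Delta X_k\rangle$ is approximately $-\tfrac12\langle {\rm grad}\,h(X_k),\Delta X_k\rangle$ plus terms of order $\|H(X_*)\|\cdot\|\Delta X_k\|^2$ and $\eta_k\|\Delta X_k\|^2$. Putting these together yields
\[
\widehat{h}_{X_k}(\Delta X_k)-\widehat{h}_{X_k}(0_{X_k}) \le \tfrac12\langle {\rm grad}\,h(X_k),\Delta X_k\rangle + o(1)\cdot\|\Delta X_k\|^2,
\]
where the $o(1)$ term tends to $0$ as $k\to\infty$ (since ${\rm grad}\,h(X_k)\to 0$ forces $\|\Delta X_k\|\to0$) and is also proportional to $\|H(X_*)\|$. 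Since $\langle {\rm grad}\,h(X_k),\Delta X_k\rangle \le -\eta_k d_1^2\|{\rm grad}\,h(X_k)\|^2 < 0$ and $\sigma<\tfrac12$, for $k$ large enough the factor $\tfrac12$ dominates $\sigma$ after absorbing the $o(1)$ remainder, so $\widehat{h}_{X_k}(\Delta X_k)-\widehat{h}_{X_k}(0_{X_k})\le \sigma\langle {\rm grad}\,h(X_k),\Delta X_k\rangle$, i.e. (\ref{DES}) holds with $l=0$.

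The main obstacle I anticipate is handling the fact that $X_*$ need not be an exact zero of $H$: the clean identity (\ref{eq:HDF}) fails, and one must instead control $\|{\rm Hess}\,h(X_k) - ({\rm D}H(X_k))^*\circ {\rm D}H(X_k)\|$, which by (\ref{eq:TTT})--(\ref{eq:TTT2}) is bounded by $\|H(X_k)\|$ times a bound on $\nabla^2 H$; this is precisely where the hypothesis ``$\|H(X_*)\|_F$ sufficiently small'' enters, and the estimate must be uniform on a neighborhood of $X_*$, using continuity of $H$ and compactness of a small closed ball. A secondary technical point is making sure the Gauss-Newton tolerances (\ref{eq:tol1})--(\ref{eq:tol2}) are actually attainable near $X_*$ (so that the fallback $\Delta X_k=-{\rm grad}\,h(X_k)$ is not triggered) — but this follows from nonsingularity of $({\rm D}H(X_k))^*\circ {\rm D}H(X_k)$ on $B_{\bar\rho}(X_*)$ together with the fact that $\eta_k\to0$, so the CG iteration meeting the residual bound also meets the angle condition for $k$ large. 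Everything else is a routine Taylor estimate on the retraction, and I would cite \cite{RW12,AMS08} for the smoothness properties of the pullback that justify the second-order expansion.
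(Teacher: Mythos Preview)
Your overall strategy is sound and leads to the same conclusion as the paper, but the route differs in execution. The paper does not Taylor-expand $\widehat h_{X_k}$ through its Hessian; instead it expands the square directly: writing $\widehat H_{X_k}(\Delta X_k)=\widehat H_{X_k}(0_{X_k})+\mathrm{D}\widehat H_{X_k}(0_{X_k})[\Delta X_k]+G(X_k)$ and computing $\tfrac12\|\cdot\|^2$ term by term. A key device in the paper is to introduce the \emph{exact} Gauss--Newton step $\Delta X_k^{GN}$, bound $\|\Delta X_k-\Delta X_k^{GN}\|\le (\kappa_1/d_1^2)\|\Delta X_k\|^2$ from (\ref{eq:tol1}), and use the normal equations (\ref{NEWEQ}) together with the least-squares optimality $\|\widehat H_{X_k}(0_{X_k})+\mathrm{D}\widehat H_{X_k}(0_{X_k})[\Delta X_k^{GN}]\|\le\|\widehat H_{X_k}(0_{X_k})\|$ to isolate the term $\kappa_2\|H(X_k)\|_F\,\|\Delta X_k\|^2$. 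Your Hessian route reaches the same $\|H(X_k)\|_F\,\|\Delta X_k\|^2$ term via the identity $D^2\widehat h_{X_k}(0_{X_k})=(\mathrm{D}\widehat H_{X_k}(0_{X_k}))^*\circ \mathrm{D}\widehat H_{X_k}(0_{X_k})+\langle \widehat H_{X_k}(0_{X_k}),D^2\widehat H_{X_k}(0_{X_k})[\cdot,\cdot]\rangle$, which is the pullback analogue of (\ref{eq:TTT}); this is legitimate and arguably more modular, while the paper's expansion is more explicit and avoids any appeal to the Riemannian Hessian.

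Two points to tighten. First, you assert $\mathrm{grad}\,h(X_k)\to 0$ (hence $\|\Delta X_k\|\to 0$) for the \emph{whole} sequence, but $X_*$ is only an accumulation point. The paper closes this by noting that surjectivity of $\mathrm{D}H(X_*)$ plus smallness of $\|H(X_*)\|_F$ make $\mathrm{Hess}\,h(X_*)$ positive definite (via (\ref{eq:TTT})--(\ref{eq:TTT2})), so $X_*$ is an isolated local minimizer, and then Lemma~\ref{lem:nons} gives $X_k\to X_*$; you should invoke this explicitly. Second, your justification ``$R$ is a second-order retraction-like map, so the pullback Hessian equals the Riemannian Hessian'' is inaccurate for the QR retraction used here; the identity $\mathrm{Hess}\,\widehat h_{X}(0_X)=\mathrm{Hess}\,h(X)$ holds at critical points for \emph{any} retraction, but at $X_k$ the two differ by a term of order $\|\mathrm{grad}\,h(X_k)\|$. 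The clean fix is to bypass the Riemannian Hessian entirely and work with the Euclidean Hessian of the pullback $\widehat h_{X_k}=\tfrac12\|\widehat H_{X_k}\|^2$ directly, exactly as the paper's expansion implicitly does.
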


\begin{proof}
Let $\Delta X_k^{GN}$ denote the exact solution to {\rm (\ref{eq:le})}. Then we have
\begin{eqnarray}\label{dx:n2}
& &({\rm D}H(X_k))^*\circ {\rm D}H(X_k)[ \Delta X_k - \Delta X_k^{GN}  ] \nonumber\\[2mm]
&=&  {\rm grad}\,h(X_k) + ({\rm D}H(X_k))^*\circ {\rm D}H(X_k)[ \Delta X_k ].
\end{eqnarray}
From {\rm (\ref{eq:le})} and (\ref{DFA}) it follows that
\BE\label{NEWEQ}
({\rm D}\widehat{H}_{X_k}(0_{X_k}))^*[\widehat{H}_{X_k}(0_{X_k})] + ({\rm D}\widehat{H}_{X_k}(0_{X_k}))^*\circ {\rm D}\widehat{H}_{X_k}(0_{X_k})
[ \Delta X_k^{GN} ] = 0_{X_k}.
\EE

By hypothesis, ${\rm D}H(X_{*})$ is surjective. We know from (\ref{eq:TTT}) and (\ref{eq:TTT2}) that
if $\|H(X_*)\|_F$ is sufficiently small, then the Riemannian Hessian $\Hess h(X_*)$ is positive definite.
Thus $X_*$ is an isolated local minimizer of $h$.
We have $X_k\to X_*$ by Lemma \ref{lem:nons}. Thus, by using Lemma \ref{pro:ctr},
(\ref{eq:tol1}), (\ref{hinv:bd}), and (\ref{dx:n2}), we have for all $k$ sufficiently large,
\begin{eqnarray}\label{ORDER}
&& \| \Delta X_k - \Delta X_k^{GN}\|  \nonumber\\
& = & \big\| \big(({\rm D}H(X_k))^*\circ {\rm D}H(X_k)\big)^{-1} \big[ {\rm grad}\,h(X_k)+ ({\rm D}H(X_k))^*\circ {\rm D}H(X_k)[ \Delta X_k ]\big] \big\|   \nonumber\\
& \le &  \normmm{\big(({\rm D}H(X_k))^*\circ {\rm D}H(X_k)\big)^{-1}}\cdot\big\|  {\rm grad}\,h(X_k)+ ({\rm D}H(X_k))^*\circ {\rm D}H(X_k)[ \Delta X_k ] \big\|   \nonumber\\
& \le &  \kappa_1 \eta_k  \|  {\rm grad}\,h(X_k)\| \le \kappa_1 \| {\rm grad}\,h(X_k)\|^2 \nonumber\\
&  \le &   \frac{\kappa_1}{d_1^2} \| \Delta X_k \|^2.
\end{eqnarray}

In addition, the differential operator ${\rm D}\widehat{H}_{X}$ is Lipschitz-continuous at $0_X$ uniformly in a neighborhood of $X_*$. That is, there exist four scalars $\kappa_2,\kappa_3, \delta_1, \delta_2 > 0$, such that
\BE\label{HLP1}
\left\{
\begin{array}{c}
\|({\rm D}\widehat{H}_{X}(\xi_X))^*- ({\rm D}\widehat{H}_{X}(0_X))^* \| \leq \kappa_2\| \xi_X  \|,\\[2mm]
\|({\rm D}\widehat{H}_{X}(\xi_X))^*\circ {\rm D}\widehat{H}_{X}(\xi_X)-({\rm D}\widehat{H}_{X}(0_X))^*\circ {\rm D}\widehat{H}_{X}(0_X)\|\leq \kappa_3\|\xi_X \|,
\end{array}
\right.
\EE
for all $X\in B_{\delta_1}(X_*)$ and  $\xi_X \in B_{\delta_2}(0_X)$.
Let
\BE\label{def:GXK}
G(X_k) := \widehat{H}_{X_k}(\Delta X_k) - \widehat{H}_{X_k}(0_{X_k}) - \mathrm{D}\widehat{H}_{X_k}(0_{X_k})[\Delta X_k].
\EE
By using Corollary 3.3 in \cite{Coleman12}, we have
\BE\label{ESTGXK}
\|G(X_k)\| \leq \sup\limits_{\theta\in [0,1]} \| \mathrm{D}\widehat{H}_{X_k}(\theta \Delta X_k) - \mathrm{D}\widehat{H}_{X_k}(0_{X_k})\|
\cdot \|\Delta X_k\|.
\EE
From (\ref{def:GXK}), we obtain
\begin{eqnarray*}
& &\widehat{h}_{X_k}(\Delta X_k) = \frac{1}{2} \| \widehat{H}_{X_k}(\Delta X_k) \|^2
= \frac{1}{2} \| \widehat{H}_{X_k}(0_{X_k}) + \mathrm{D}\widehat{H}_{X_k}(0_{X_k})[\Delta X_k] + G(X_k)\|^2\\
&=& \frac{1}{2} \| \widehat{H}_{X}(0_{X_k})\|^2 + \big\langle \widehat{H}_{X_k}(0_{X_k}),
 \mathrm{D}\widehat{H}_{X_k}(0_{X_k})[\Delta X_k] \big\rangle
+ \frac{1}{2} \big\langle \mathrm{D}\widehat{H}_{X_k}(0_{X_k})[\Delta X_k], \mathrm{D}\widehat{H}_{X_k}(0_{X_k})[\Delta X_k] \big\rangle\\
& &+ \big\langle  \widehat{H}_{X_k}(0_{X_k}) + \mathrm{D}\widehat{H}_{X_k}(0_{X_k})[\Delta X_k], G(X_k)\big\rangle
+ \frac{1}{2} \|G(X_k)\|^2\\
&=& \widehat{h}_{X_k}(0_{X_k}) + \big\langle \widehat{H}_{X_k}(0_{X_k}),
 \mathrm{D}\widehat{H}_{X_k}(0_{X_k})[\Delta X_k] \big\rangle
+ \frac{1}{2} \big\langle(\mathrm{D}\widehat{H}_{X_k}(0_{X_k}))^* \mathrm{D}\widehat{H}_{X_k}(0_{X_k})[\Delta X_k], \Delta X_k \big\rangle\\
& &+ \big\langle  \widehat{H}_{X_k}(0_{X_k}) + \mathrm{D}\widehat{H}_{X_k}(0_{X_k})[\Delta X_k], G(X_k)\big\rangle
+ \frac{1}{2} \|G(X_k)\|^2.
\end{eqnarray*}
Using  (\ref{hinv:bd}), (\ref{GRAA}), (\ref{DFA}), (\ref{NEWEQ}),  (\ref{ORDER}), (\ref{HLP1}), (\ref{ESTGXK}),
and the above equality, we have for all $k$ sufficiently large,
\begin{eqnarray*}
& &h\big(R_{X_k}(\Delta X_k)\big) - h(X_k)
-\frac{1}{2}  \big\langle  {\rm grad}\,h(X_k), \Delta X_k \big\rangle  \\
&=&\widehat{h}_{X_k}(\Delta X_k) - \widehat{h}_{X_k}(0_{X_k})-
\frac{1}{2}\big \langle ({\rm D}\widehat{H}_{X_k}(0_{X_k}))^*[\widehat{H}_{X_k}(0_{X_k})], \Delta X_k   \big \rangle \\
&=&\frac{1}{2} \big\langle \widehat{H}_{X_k}(0_{X_k}), \mathrm{D}\widehat{H}_{X_k}(0_{X_k})[\Delta X_k] \big\rangle
+ \frac{1}{2} \big\langle(\mathrm{D}\widehat{H}_{X_k}(0_{X_k}))^* \mathrm{D}\widehat{H}_{X_k}(0_{X_k})[\Delta X_k], \Delta X_k \big\rangle\\
& &+ \big\langle  \widehat{H}_{X_k}(0_{X_k}) + \mathrm{D}\widehat{H}_{X_k}(0_{X_k})[\Delta X_k], G(X_k)\big\rangle
+ \frac{1}{2} \|G(X_k)\|^2 \\
&=&\frac{1}{2} \big \langle ({\rm D}\widehat{H}_{X_k}(0_{X_k}))^*[\widehat{H}_{X_k}(0_{X_k})] + ({\rm D}\widehat{H}_{X_k}(0_{X_k}))^*\circ {\rm D}\widehat{H}_{X_k}(0_{X_k})[\Delta X_k^{GN} ], \Delta X_k   \big \rangle\\
& & +  \frac{1}{2} \big \langle  ({\rm D}\widehat{H}_{X_k}(0_{X_k}))^*\circ {\rm D}\widehat{H}_{X_k}(0_{X_k})[\Delta X_k - \Delta X_k^{GN} ],
\Delta X_k  \big \rangle \\
& &+ \big\langle  \widehat{H}_{X_k}(0_{X_k}) + \mathrm{D}\widehat{H}_{X_k}(0_{X_k})[\Delta X^{GN}_k], G(X_k)\big\rangle
+ \frac{1}{2} \|G(X_k)\|^2\\
& &+ \big\langle \mathrm{D}\widehat{H}_{X_k}(0_{X_k})[\Delta X_k - \Delta X_k^{GN} ], G(X_k)\big\rangle\\
&\le& 0  +  \frac{1}{2} \normmm{({\rm D}\widehat{H}_{X_k}(0_{X_k}))^*\circ {\rm D}\widehat{H}_{X_k}(0_{X_k})}\cdot\|\Delta X_k
- \Delta X_k^{GN} ]\|\cdot\| \Delta X_k  \|\\
& &+  \|\widehat{H}_{X_k}(0_{X_k}) + \mathrm{D}\widehat{H}_{X_k}(0_{X_k})[\Delta X^{GN}_k]\|\cdot\| G(X_k)\|
+ \frac{1}{2} \|G(X_k)\|^2\\
& &+  \normmm{\mathrm{D}\widehat{H}_{X_k}(0_{X_k})}\cdot\|\Delta X_k - \Delta X_k^{GN} ]\|\cdot\| G(X_k)\|\\
&\leq & \frac{1}{2} \frac{\kappa_1\kappa_0}{d_1^2} \| \Delta X_k   \|^3
+  \kappa_2\|\widehat{H}_{X_k}(0_{X_k})\|_F\cdot\|\Delta X_k \|^2
+ \frac{1}{2} \kappa_2^2\|\Delta X_k \|^4\\
& &+  \frac{\kappa_1\kappa_2}{d_1^2}\normmm{\mathrm{D}\widehat{H}_{X_k}(0_{X_k})} \cdot\|\Delta X_k \|^4\\
&=& \kappa_2\|H(X_k)\|_F\cdot\|\Delta X_k \|^2 + \frac{1}{2} \frac{\kappa_1\kappa_0}{d_1^2} \| \Delta X_k   \|^3
+ \Big(\frac{1}{2} \kappa_2^2+\frac{\kappa_1\kappa_2}{d_1^2}\normmm{\mathrm{D}\widehat{H}_{X_k}(0_{X_k})} \Big)\|\Delta X_k \|^4.
\end{eqnarray*}
If $\|H(X_*)\|_F$ is sufficiently small, then $\|H(X_k)\|_F$ is small enough for all $k$ sufficiently large.
By using the above inequality, (\ref{DES}) holds with  $l_k = 0$ for all $k$ sufficiently large.
This completes the proof.
\end{proof}

We now establish the local convergence rate of Algorithm \ref{nm1}.
%----------------
\begin{theorem} \label{th:qc}
Let $X_*$ be an accumulation point of the sequence $\{X_k\}$ generated by Algorithm {\rm \ref{nm1}}.
If ${\rm D}H(X_{*}): T_{X_*}\mathbb{R}^{l} \times \mathcal{O}(n) \times \cd(n-m)\to  T_{F(X_*)}\SRn$is surjective
and $\|H(X_*)\|_F$ is sufficiently small, then the whole sequence $\{X_k\}$ converges to $X_*$ linearly. Furthermore, if $H(X_*)=\mathbf{0}_{n\times n}$, then the whole sequence $\{X_k\}$ converges to $X_*$ quadratically.
\end{theorem}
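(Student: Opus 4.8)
The plan is to combine Lemmas~\ref{pro:ctr}, \ref{lem:nons} and \ref{lem:stepsize} with a second-order expansion of $H$ along the retraction, and then to derive a recursive inequality for $\dist(X_{k+1},X_*)$. \emph{First}, since $\mathrm{D}H(X_*)$ is surjective and $\|H(X_*)\|_F$ is sufficiently small, formulas (\ref{eq:TTT})--(\ref{eq:TTT2}) show that $\Hess h(X_*)$ is positive definite, so $X_*$ is an isolated local minimizer of $h$; by Lemma~\ref{lem:nons} the whole sequence satisfies $X_k\to X_*$, and by Lemma~\ref{lem:stepsize} we have $l_k=0$, hence $X_{k+1}=R_{X_k}(\Delta X_k)$, for all $k$ sufficiently large. \emph{Second}, because $\grad h(X_*)=0_{X_*}$ and $\Hess h(X_*)$ is nonsingular, a first-order Taylor expansion of $\grad h$ near $X_*$ furnishes constants $0<c_1\le c_2$ with $c_1\dist(X,X_*)\le\|\grad h(X)\|\le c_2\dist(X,X_*)$; together with Lemma~\ref{pro:ctr} this gives the two-sided comparison $\|\Delta X_k\|\asymp\|\grad h(X_k)\|\asymp\dist(X_k,X_*)$ for large $k$, and (\ref{ORDER}) gives $\|\Delta X_k-\Delta X_k^{GN}\|\le(\kappa_1/d_1^2)\|\Delta X_k\|^2=O(\dist(X_k,X_*)^2)$, where $\Delta X_k^{GN}$ denotes the exact Gauss--Newton step.

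\emph{The core of the argument} is the following estimate. Working with the pullback $\widehat H_{X_k}=H\circ R_{X_k}$ and using (\ref{def:GXK}), write $\widehat H_{X_k}(\Delta X_k)=\widehat H_{X_k}(0_{X_k})+\mathrm{D}\widehat H_{X_k}(0_{X_k})[\Delta X_k]+G(X_k)$ with $\|G(X_k)\|=O(\|\Delta X_k\|^2)$ by (\ref{ESTGXK})--(\ref{HLP1}). Applying $(\mathrm{D}\widehat H_{X_k}(0_{X_k}))^*$, using the Gauss--Newton equation (\ref{NEWEQ}) to eliminate the term $(\mathrm{D}\widehat H_{X_k}(0_{X_k}))^*[\widehat H_{X_k}(0_{X_k})]$, and splitting $\Delta X_k=\Delta X_k^{GN}+(\Delta X_k-\Delta X_k^{GN})$, one obtains $\|(\mathrm{D}\widehat H_{X_k}(0_{X_k}))^*[\widehat H_{X_k}(\Delta X_k)]\|=O(\dist(X_k,X_*)^2)$ by (\ref{hinv:bd}) and the comparison above. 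Since $\grad\widehat h_{X_k}(\Delta X_k)=(\mathrm{D}\widehat H_{X_k}(\Delta X_k))^*[\widehat H_{X_k}(\Delta X_k)]$, replacing the differential at $\Delta X_k$ by the one at $0_{X_k}$ via (\ref{HLP1}) costs at most $\kappa_2\|\Delta X_k\|\,\|H(X_{k+1})\|_F$; bounding $\|H(X_{k+1})\|_F\le\|H(X_*)\|_F+L\dist(X_{k+1},X_*)$ by local Lipschitz continuity of $H$, and using $\grad\widehat h_{X_k}(\Delta X_k)=(\mathrm{D}R_{X_k}(\Delta X_k))^*[\grad h(X_{k+1})]$ with $\mathrm{D}R_{X_k}(\Delta X_k)$ invertible and $O(\|\Delta X_k\|)$-close to the identity (a property of retractions, see \cite{AMS08}), one arrives at
\[
\dist(X_{k+1},X_*)\le C_1\dist(X_k,X_*)^2+C_2\|H(X_*)\|_F\,\dist(X_k,X_*)+C_3\dist(X_k,X_*)\,\dist(X_{k+1},X_*)
\]
for all large $k$, with constants $C_1,C_2,C_3>0$. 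Since $\dist(X_k,X_*)\to0$, the last term is absorbed into the left-hand side, leaving $\dist(X_{k+1},X_*)\le 2C_1\dist(X_k,X_*)^2+2C_2\|H(X_*)\|_F\,\dist(X_k,X_*)$.

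\emph{Finally}, if $\|H(X_*)\|_F$ is small enough that $2C_2\|H(X_*)\|_F<1$, choose $K$ with $2C_1\dist(X_K,X_*)<1-2C_2\|H(X_*)\|_F$; an induction on $k\ge K$ then shows $\dist(X_{k+1},X_*)\le r\,\dist(X_k,X_*)$ with the fixed ratio $r:=2C_1\dist(X_K,X_*)+2C_2\|H(X_*)\|_F<1$, i.e. $\{X_k\}$ converges to $X_*$ linearly. If in addition $H(X_*)=\mathbf{0}_{n\times n}$, the residual term disappears and $\dist(X_{k+1},X_*)\le 2C_1\dist(X_k,X_*)^2$, i.e. the convergence is quadratic.

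\textbf{Main obstacle.} The delicate step is the core estimate: turning ``$(\mathrm{D}\widehat H_{X_k}(0_{X_k}))^*[\widehat H_{X_k}(\Delta X_k)]$ is $O(\dist(X_k,X_*)^2)$'' into a genuine recursion for $\dist(X_{k+1},X_*)$. This needs (i) absorbing the gap between $\mathrm{D}\widehat H_{X_k}$ at $\Delta X_k$ and at $0_{X_k}$ through the uniform Lipschitz bounds (\ref{HLP1}); (ii) transferring from the pullback gradient $\grad\widehat h_{X_k}(\Delta X_k)$ to $\grad h(X_{k+1})$ through $\mathrm{D}R_{X_k}(0_{X_k})=\mathrm{id}$; and (iii) isolating the residual term $\|H(X_*)\|_F$, which is exactly what distinguishes the linear regime from the quadratic one. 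Everything else is a routine assembly of the already established lemmas.
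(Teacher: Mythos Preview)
Your argument is correct, but the route differs from the paper's in the ``core estimate''. The paper does \emph{not} expand $\widehat H_{X_k}$ and then switch from $(\mathrm{D}\widehat H_{X_k}(0_{X_k}))^*$ to $(\mathrm{D}\widehat H_{X_k}(\Delta X_k))^*$. Instead it Taylor-expands $\grad\widehat h_{X_k}$ directly via $\Hess\widehat h_{X_k}$:
\[
\grad\widehat h_{X_k}(\Delta X_k)=\grad\widehat h_{X_k}(0_{X_k})+(\mathrm{D}\widehat H)^*\mathrm{D}\widehat H[\Delta X_k]
+\bigl(\Hess\widehat h_{X_k}(0_{X_k})-(\mathrm{D}\widehat H)^*\mathrm{D}\widehat H\bigr)[\Delta X_k]+\text{(integral remainder)},
\]
bounds the first two terms by $\eta_k\|\grad h(X_k)\|$ via (\ref{eq:tol1}), the third by $\kappa_4\|H(X_k)\|_F\|\Delta X_k\|$ via the second-order identity (\ref{eq:TTT})--(\ref{eq:TTT2}) (this is the bound (\ref{eq:HESS})), and the remainder by $\kappa_5\|\Delta X_k\|^2$ via Lipschitzness of $\Hess\widehat h$. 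Combined with the AMS08 Lemmas~7.4.8--7.4.9 (your two-sided gradient--distance comparison and the pullback-to-manifold gradient transfer, quoted as (\ref{SS})), this yields directly
\[
\dist(X_{k+1},X_*)\le c_1\|H(X_k)\|_F\,\dist(X_k,X_*)+c_2\,\dist(X_k,X_*)^2,
\]
with no implicit $\dist(X_{k+1},X_*)$ on the right and hence no absorption step.

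What each approach buys: the paper's route is cleaner---the residual factor appears as $\|H(X_k)\|_F$ rather than $\|H(X_{k+1})\|_F$, so no bootstrap is needed---but it requires the Hessian bound (\ref{eq:HESS}), which in turn rests on the $(0,2)$-tensor identity (\ref{eq:TTT}). Your route avoids the Hessian of $\widehat h$ altogether and uses only first-order Lipschitz data on $\mathrm{D}\widehat H$, at the price of the extra $C_3\dist(X_k,X_*)\dist(X_{k+1},X_*)$ term that you then absorb. Both are valid; your appeal to $\mathrm{D}R_{X_k}(\Delta X_k)$ being $O(\|\Delta X_k\|)$-close to the identity is exactly what the paper gets from \cite[Lemma~7.4.9]{AMS08}, so it would be good to cite that rather than leave it as a parenthetical remark.
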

\begin{proof}
By hypothesis, ${\rm D}H(X_{*})$ is surjective. From (\ref{eq:TTT}) and (\ref{eq:TTT2}) it follows that
if $\|H(X_*)\|_F$ is sufficiently small, then  $\Hess h(X_*)$ is positive definite.
We have $X_k\to X_*$ from Lemma \ref{lem:nons}. By Lemma {\rm \ref{lem:stepsize}}, we have
$X_{k+1} = R_{X_k}(\Delta X_k)$ for all $k$ sufficiently large. Using Lemma 7.4.8 and Lemma 7.4.9
in \cite{AMS08}, there exist three scalars $\tau_0,\tau_1,\tau_2>0$ such that
for all $k$ sufficiently large,
\BE\label{SS}
\left\{
\begin{array}{l}
\tau_0{\rm dist}(X_k,X_*)\le\| {\rm grad}\, h(X_k) \| \le \tau_1{\rm dist}(X_k,X_*),\\[2mm]
\| {\rm grad}\, h(X_{k+1}) \| = \| {\rm grad}\, h\big(R_{X_k}(\Delta X_{k})\big) \|
\leq  \tau_2 \| {\rm grad}\, \widehat{h}_{X_k}(\Delta X_{k})  \|.
\end{array}
\right.
\EE
By using Taylor's formula we have for all $k$ sufficiently large,
\begin{eqnarray}\label{est:gxjdx}
{\rm grad}\, \widehat{h}_{X_k}(\Delta X_k)
&=& {\rm grad}\, \widehat{h}_{X_k}(0_{X_k})+({\rm D}\widehat{H}_{X_k}(0_{X_k}))^*\circ
 {\rm D}\widehat{H}_{X_k}(0_{X_k})[\Delta X_k ]\nonumber \\[2mm]
& &+{\rm Hess}\; \widehat{h}_{X_k}(0_{X_k})[ \Delta X_k ]-
({\rm D}\widehat{H}_{X_k}(0_{X_k}))^*\circ {\rm D}\widehat{H}_{X_k}(0_{X_k})[\Delta X_k ]\nonumber\\
& & +  \int^1_0 \big( {\rm Hess}\; \widehat{h}_{X_k}(t\Delta X_k ) - {\rm Hess}\; \widehat{h}_{X_k}(0_{X_k}) \big)[ \Delta X_k ]{\rm d}t.
\end{eqnarray}

Since $H$ is twice continuously differentiable, it follows from (\ref{eq:TTT}) and (\ref{eq:TTT2}) that
there exist two scalars $\kappa_4>0$ and $\delta_3>0$ such that for all $X\in B_{\delta_3}(X_*)$,
\BE\label{eq:HESS}
\normmm{{\rm Hess}\; \widehat{h}_{X_k}(0_{X_k})-({\rm D}\widehat{H}_{X_k}(0_{X_k}))^*\circ {\rm D}\widehat{H}_{X_k}(0_{X_k})}
\leq \kappa_4 \|H(X_k)\|_F.
\EE
Furthermore, the Hessian operator ${\rm Hess}\; \widehat{h}_{X}$ is Lipschitz-continuous at $0_X$
uniformly in a neighborhood of $X_*$, i.e., there exist three scalars $\kappa_5>0$, $\delta_4 >0$,
and $\delta_5 > 0$, such that for all $X\in B_{\delta_4}(X_*)$
and $\xi_X \in B_{\delta_5}(0_X)$, it holds that
\BE\label{HLP2}
\normmm{{\rm Hess}\, \widehat{h}_{X}(\xi_X)- {\rm Hess}\, \widehat{h}_{X}(0_X)}\leq \kappa_5\| \xi_X  \|.
\EE
In addition, $H$ is Lipschitz-continuous in a neighborhood of $X_*$, i.e.,
there exits two constants $L>0$ and $\delta_6 > 0$ such that for all $X,Y\in B_{\delta_6}(X_*)$,
\BE\label{HHLP}
\| H(X) - H(Y) \|_F \leq L{\rm dist}(X,Y).
\EE
From Lemma \ref{pro:ctr}, (\ref{eq:tol1}), (\ref{SS}), (\ref{est:gxjdx}), (\ref{eq:HESS}), and (\ref{HLP2}), we have
for $k$ sufficiently large,
\begin{eqnarray} \label{SSS}
&&\frac{\tau_0}{\tau_2}\, {\rm dist} (X_{k+1},X_*) \le \| {\rm grad}\, \widehat{h}_{X_k}(\Delta X_k) \| \nonumber\\[2mm]
&\leq& \big\| {\rm grad}\, \widehat{h}_{X_k}(0_{X_k})
+ ({\rm D}\widehat{H}_{X_k}(0_{X_k}))^*\circ {\rm D}\widehat{H}_{X_k}(0_{X_k})[\Delta X_k ] \big\| \nonumber\\[2mm]
& & + \big\|{\rm Hess}\; \widehat{h}_{X_k}(0_{X_k})[ \Delta X_k ] -
({\rm D}\widehat{H}_{X_k}(0_{X_k}))^*\circ {\rm D}\widehat{H}_{X_k}(0_{X_k})[\Delta X_k ] \big\| \nonumber\\[2mm]
& & + \bigg\| \int^1_0\big( {\rm Hess}\; \widehat{h}_{X_k}(t\Delta X_k ) -
{\rm Hess}\; \widehat{h}_{X_k}(0_{X_k})\big)[ \Delta X_k ] dt \bigg\| \nonumber\\[2mm]
&\leq& \|  {\rm grad}\,h(X_k)+({\rm D}\widehat{H}_{X_k}(0_{X_k}))^*\circ {\rm D}\widehat{H}_{X_k}(0_{X_k})[\Delta X_k ] \|\nonumber\\[2mm]
& &+ \kappa_4\|H(X_k)\|_F\cdot \|\Delta X_k\| + \kappa_5 \|\Delta X_k\|^2 \nonumber\\[2mm]
&\leq& \eta_k \| {\rm grad}\,h(X_k) \| + \kappa_4d_2\|H(X_k)\|_F\cdot\|  {\rm grad}\,h(X_k)\| \nonumber \\[2mm]
& &+ \kappa_5 d_2^2\|  {\rm grad}\,h(X_k)\|^2 \nonumber \\[2mm]
&\leq& \kappa_4d_2\tau_1\|H(X_k)\|_F{\rm dist}(X_k,X_{*}) + (1+\kappa_5 d_2^2)\| {\rm grad}\,h(X_k) \|^2 \nonumber\\[2mm]
&\leq& \kappa_4d_2\tau_1\|H(X_k)\|_F{\rm dist}(X_k,X_{*}) + ( 1 + \kappa_5 d_2^2)\tau_1^2\big({\rm dist}(X_k, X_*)\big)^2.
\end{eqnarray}
Thus,
\begin{eqnarray*}
{\rm dist}(X_{k+1}, X_* ) &\leq& \displaystyle \frac{\tau_1\tau_2}{\tau_0}\kappa_4d_2\|H(X_k)\|_F{\rm dist}(X_k,X_{*})
+ \frac{\tau_1^2\tau_2}{\tau_0}( 1 + \kappa_5 d_2^2)\big({\rm dist}(X_k, X_*)\big)^2\\
&=& c_1\|H(X_k)\|_F{\rm dist}(X_k,X_{*}) + c_2\big({\rm dist}(X_k, X_*)\big)^2,
\end{eqnarray*}
where $c_1:=\frac{\tau_1\tau_2}{\tau_0}\kappa_4d_2$ and $c_2 :=\frac{\tau_1^2\tau_2}{\tau_0}( 1 + \kappa_5 d_2^2)$.
If $\|H(X_*)\|_F$ is sufficiently small, then $\|H(X_k)\|_F$ is small enough such that
$c_1\|H(X_k)\|_F <1$ for all $k$ sufficiently large.
Thus if $\|H(X_*)\|$ is sufficiently small, then$\{X_k\}$ converges to $X_*$ linearly.

If $H(X_*)=\mathbf{0}_{n\times n}$, then we have from (\ref{HHLP}) for all $k$ sufficiently large,
\BE\label{HHLP1}
\| H(X_k) \|_F = \| H(X_k) - H(X_*)\|_F \leq L {\rm dist}(X_k,X_*).
\EE
Using (\ref{SSS}) and (\ref{HHLP1}), we have
\[
\begin{array}{rl}
{\rm dist}(X_{k+1}, X_* )\leq&  \displaystyle \frac{\tau_1\tau_2}{\tau_0}\big(\kappa_4d_2L
+ ( 1 + \kappa_5 d_2^2)\tau_1\big)\big({\rm dist}(X_k, X_*)\big)^2.
\end{array}
\]
Therefore, if $H(X_*)=\mathbf{0}_{n\times n}$, then $\{X_k\}$ converges to $X_*$ quadratically.
This completes the proof.
\end{proof}

As a direct consequence of {\rm (\ref{SS})} and {\rm (\ref{SSS})}, we have the following result.

\begin{corollary}\label{lem:s93-fff}
Let $X_*$ be an accumulation point of the sequence $\{X_k\}$ generated by Algorithm {\rm \ref{nm1}}. Suppose
the assumptions in Theorem {\rm \ref{th:qc}} are satisfied. Then there exists two constants $\mu_1, \mu_2>0$ such that
for all $k$ sufficiently large,
\[
\begin{array}{rcl}
 \| {\rm grad}\,  h(X_{k+1}) \| \leq \mu_1\|H(X_k)\|_F\| {\rm grad}\,  h(X_k) \| + \mu_2\| {\rm grad}\,  h(X_k) \|^2.
\end{array}
\]
Furthermore, if $H(X_*)=\mathbf{0}_{n\times n}$, then there exists a scalar $\bar{\nu}>0$ such that for all $k$ sufficiently large,
\[
\begin{array}{rcl}
 \| {\rm grad}\,  h(X_{k+1}) \| \leq \bar{\nu}\| {\rm grad}\,  h(X_k) \|^2.
\end{array}
\]
\end{corollary}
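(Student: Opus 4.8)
The plan is to obtain the corollary directly from the estimates already assembled inside the proof of Theorem \ref{th:qc}, simply regrouping the terms rather than introducing anything new. Under the hypotheses of Theorem \ref{th:qc} (${\rm D}H(X_*)$ surjective and $\|H(X_*)\|_F$ small) we know from (\ref{eq:TTT}) and (\ref{eq:TTT2}) that $\Hess h(X_*)$ is positive definite, so $X_*$ is an isolated minimizer; hence $X_k\to X_*$ by Lemma \ref{lem:nons} and $X_{k+1}=R_{X_k}(\Delta X_k)$ for all large $k$ by Lemma \ref{lem:stepsize}. Consequently all the ``for $k$ sufficiently large'' clauses appearing in (\ref{SS}), (\ref{SSS}) and (\ref{HHLP1}) hold once $k$ exceeds a common threshold, and I would fix such a $k$ throughout.

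First I would read off from the chain (\ref{SSS}) the intermediate bound
\[
\|{\rm grad}\,\widehat{h}_{X_k}(\Delta X_k)\|\le \eta_k\|{\rm grad}\,h(X_k)\|+\kappa_4 d_2\|H(X_k)\|_F\|{\rm grad}\,h(X_k)\|+\kappa_5 d_2^2\|{\rm grad}\,h(X_k)\|^2,
\]
which is precisely the line produced there from (\ref{eq:tol1}), (\ref{eq:HESS}), (\ref{HLP2}) together with Lemma \ref{pro:ctr} (the last used to replace $\|\Delta X_k\|$ by $d_2\|{\rm grad}\,h(X_k)\|$). Since $\eta_k=\min\{\eta_{\max},\|{\rm grad}\,h(X_k)\|\}\le\|{\rm grad}\,h(X_k)\|$, the first term is at most $\|{\rm grad}\,h(X_k)\|^2$, giving
\[
\|{\rm grad}\,\widehat{h}_{X_k}(\Delta X_k)\|\le \kappa_4 d_2\|H(X_k)\|_F\|{\rm grad}\,h(X_k)\|+(1+\kappa_5 d_2^2)\|{\rm grad}\,h(X_k)\|^2.
\]
Multiplying through by $\tau_2$ and applying the second inequality of (\ref{SS}), namely $\|{\rm grad}\,h(X_{k+1})\|\le\tau_2\|{\rm grad}\,\widehat{h}_{X_k}(\Delta X_k)\|$, yields the first assertion with $\mu_1:=\tau_2\kappa_4 d_2$ and $\mu_2:=\tau_2(1+\kappa_5 d_2^2)$.

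For the quadratic estimate I would assume $H(X_*)=\mathbf{0}_{n\times n}$ and combine two already-available bounds: (\ref{HHLP1}) gives $\|H(X_k)\|_F\le L\,{\rm dist}(X_k,X_*)$, while the left inequality of (\ref{SS}) gives ${\rm dist}(X_k,X_*)\le\tau_0^{-1}\|{\rm grad}\,h(X_k)\|$, whence $\|H(X_k)\|_F\le (L/\tau_0)\|{\rm grad}\,h(X_k)\|$. Substituting this into the first assertion converts the mixed term $\mu_1\|H(X_k)\|_F\|{\rm grad}\,h(X_k)\|$ into $(\mu_1 L/\tau_0)\|{\rm grad}\,h(X_k)\|^2$, and therefore
\[
\|{\rm grad}\,h(X_{k+1})\|\le\bar\nu\,\|{\rm grad}\,h(X_k)\|^2,\qquad \bar\nu:=\frac{\mu_1 L}{\tau_0}+\mu_2 .
\]

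There is no genuine analytic obstacle here; the only point demanding care is the bookkeeping check that a single index threshold makes all of (\ref{SS}), (\ref{SSS}) and (\ref{HHLP1}) valid simultaneously, which is automatic since each of them is triggered by $X_k$ lying in a fixed neighborhood of $X_*$ and $X_k\to X_*$. The conceptually harder work — deriving (\ref{SS}) and (\ref{SSS}) — has already been carried out in the proof of Theorem \ref{th:qc}, so the corollary is essentially a one-line consequence of those two displays.
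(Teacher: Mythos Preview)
Your proposal is correct and follows exactly the route the paper intends: the paper states the corollary ``as a direct consequence of (\ref{SS}) and (\ref{SSS})'' without writing out the details, and what you have done is precisely to extract the appropriate intermediate line of (\ref{SSS}), combine it with the second inequality of (\ref{SS}), and then invoke (\ref{HHLP1}) together with the first inequality of (\ref{SS}) for the zero-residual case. There is nothing to add.
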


\subsection{Surjectivity condition}
In this section, we provide the surjectivity condition of $\mathrm{D} H(X_{*})$,
where $X_*=(\bc_*,Q_*,\La_*)$ is an accumulation point of the sequence $\{X_k\}$ generated
by Algorithm \ref{nm1}.
Based on (\ref{eq:LSIEPDF1}), $\mathrm{D} H(X_*)$ is surjective if and only if the following matrix equation
\BE\label{eq:matrixeq11}
\left\{
\begin{array}{l}
(A(\Delta \mathbf{c})-A_0) + Q_*\overline{\Lambda}_* Q_*^T\Delta QQ_*^T - \Delta Q\overline{\Lambda}_* Q_*^T -(Q_*P)\Delta \Lambda (Q_*P)^T = \mathbf{0}_{n\times n}, \\[2mm]
\hspace{3cm}\mbox{s.t.}\quad (\Delta \mathbf{c}, \Delta Q, \Delta \Lambda) \in
T_{X_*}\mathbb{R}^{l} \times \mathcal{O}(n) \times \cd(n-m)
\end{array}
\right.
\EE
has a unique  solution $(\Delta \mathbf{c}, \Delta Q, \Delta \Lambda)=(\mathbf{0}_{m},\mathbf{0}_{n\times n}, \mathbf{0}_{(n-m)\times (n-m)})\in T_{X_*}\mathbb{R}^{l} \times \mathcal{O}(n) \times \cd(n-m)$, where $\overline{\Lambda}_*:={\rm blkdiag}\left(\Lambda^*_m \;,\Lambda_* \right)$ and $P$ is defined in (\ref{def:LAMP}).

For $W\in \mathbb{R}^{n\times n}$, define $\widehat{\mathrm{vec}}(W) \in \mathbb{R}^{\frac{n(n-1)}{2}}$ by
\[
\widehat{\mathrm{vec}}(W)\Big({\frac{(j-1)(j-2)}{2}+i}\Big) := W_{ij}, \quad i<j, \quad j=2,\ldots,n.
\]
This shows that $\widehat{\mathrm{vec}}(W)$ is a column vector obtained by stacking the strictly upper triangular part of $W$.
For $\mathbf{w}\in \mathbb{R}^{\frac{n(n-1)}{2}}$, define $\widehat{\mathrm{skew}}(\mathbf{w}) \in \mathbb{R}^{n\times n}$ by
\[
\widehat{\mathrm{vec}} \Big(\widehat{\mathrm{skew}}(\mathbf{w})\Big) := \mathbf{w}, \quad
 \quad \widehat{\mathrm{vec}} \Big( \big(\widehat{\mathrm{skew}}(\mathbf{w})\big)^T \Big) := -\mathbf{w},
\]
and
\[
\big(\widehat{\mathrm{skew}}(\mathbf{w})\big)_{ii}=0, \quad \quad i=1,2,\ldots,n.
\]
We observe that $\widehat{\mathrm{skew}}(\mathbf{w})$ is a skew-symmetric matrix constructed from $\mathbf{w}$.
Therefore, $\widehat{\mathrm{vec}}$ and $\widehat{\mathrm{skew}}$ are a pair of inverse operators.
In addition, there exists a matrix $\widehat{P}\in \mathbb{R}^{n^2\times \frac{n(n-1)}{2}}$ such that
\BE\label{def:PHAT}
\mathrm{vec}\big(\widehat{\mathrm{skew}}(\mathbf{w})\big) = \widehat{P}\mathbf{w}
\EE
for all $\mathbf{w}\in \mathbb{R}^{\frac{n(n-1)}{2}}$.
Since $\Delta Q\in T_{Q_*} \mathcal{O}(n)$, there exists a skew-symmetric matrix $\Delta \Omega \in \mathbb{R}^{n\times n}$ such that
$\Delta Q = Q\Delta \Omega$. For $\Delta \Omega \in \mathbb{R}^{n\times n}$, it follows from (\ref{def:PHAT}) that
there exists a vector $\Delta \mathbf{v}\in \mathbb{R}^{\frac{n(n-1)}{2}}$
such that $\mathrm{vec}\big(\Delta \Omega\big) = \widehat{P}\Delta \mathbf{v}$. Thus, we have
\BE\label{DQ}
\begin{array}{rcl}
\mathrm{vec}(\Delta Q) = \mathrm{vec}(Q\Delta \Omega) = (I_n \otimes Q) \mathrm{vec}(\Delta \Omega)
= (I_n \otimes Q)\widehat{P} \Delta \mathbf{v},
\end{array}
\EE
where $``\otimes"$ means the Kronecker product.
Let $\widehat{A}$ be an $n^2\times l$ matrix defined by
\BE\label{def:AHAT}
\widehat{A}:= \big[ \mathrm{vec}(A_1), \mathrm{vec}(A_2),  \ldots, \mathrm{vec}(A_l) \big]\in \mathbb{R}^{n^2\times l}.
\EE
Since $\Delta \Lambda\in \cd(n-m)$, there exists a matrix $G\in \R^{(n-m)^2\times (n-m)}$
and a vector $\Delta \bw\in \R^{n-m}$ such that
\BE\label{Dw}
{\rm vec}(\Delta \Lambda) = G\Delta \bw.
\EE
Based on (\ref{DQ}), (\ref{def:AHAT}), and (\ref{Dw}), the vectorization of the matrix equation (\ref{eq:matrixeq11}) is given by
\BE\label{eq:matrixeq22}
\begin{array}{l}
\Big[ \widehat{A},\; (Q_*\otimes Q)(I_n\otimes\overline{\Lambda}-\overline{\Lambda}\otimes I_n)\widehat{P},\;
(QP)\otimes (QP)G \Big]
\left[
\begin{array}{c}
\Delta \mathbf{c}\\
\Delta \mathbf{v}\\
\Delta \bw
\end{array}
\right]
= \mathbf{0}_{n^2}.
\end{array}
\EE

Based on the above analysis, we have the following surjectivity condition of $\mathrm{D} H(X_{*})$.
\begin{theorem}\label{riediff:frk1}
Let $X_*=(\bc_*,Q_*,\La_*)$ be an accumulation point of the sequence $\{X_k\}$ generated
by Algorithm {\rm \ref{nm1}}. Then ${\rm D}H(X_*)$ is surjective if and only if the following matrix
\[%BE\label{eq:fullrank1}
\Big[ \widehat{A},\; (Q_*\otimes Q_*)(I_n\otimes\overline{\Lambda}_*-\overline{\Lambda}_*\otimes I_n)\widehat{P},\;
(Q_*P)\otimes (Q_*P)G \Big]
\]%EE
is of full rank.
\end{theorem}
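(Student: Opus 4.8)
The plan is to convert the abstract surjectivity statement into the concrete rank condition by carrying out, and then inverting, the vectorization that has been prepared just before the statement. First I would invoke the observation made in the preceding paragraph: from the formula (\ref{eq:LSIEPDF1}) for $\mathrm{D}H(X_*)$, the operator $\mathrm{D}H(X_*)$ is surjective if and only if the homogeneous equation (\ref{eq:matrixeq11}) admits only the trivial solution $(\Delta\mathbf{c},\Delta Q,\Delta\Lambda)=(\mathbf{0}_m,\mathbf{0}_{n\times n},\mathbf{0}_{(n-m)\times(n-m)})$ in $T_{X_*}\mathbb{R}^{l}\times\mathcal{O}(n)\times\cd(n-m)$. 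So everything reduces to showing that (\ref{eq:matrixeq11}) has only the trivial solution exactly when the displayed matrix has full column rank.

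The heart of the argument is a linear change of variables that identifies the tangent space $T_{X_*}\mathbb{R}^{l}\times\mathcal{O}(n)\times\cd(n-m)$ with $\mathbb{R}^{l}\times\mathbb{R}^{n(n-1)/2}\times\mathbb{R}^{n-m}$ and transports (\ref{eq:matrixeq11}) into the system (\ref{eq:matrixeq22}). Since $X_*=(\bc_*,Q_*,\La_*)$, every element of the tangent space has the form $(\Delta\mathbf{c},Q_*\Delta\Omega,\Delta\Lambda)$ with $\Delta\Omega$ skew-symmetric; writing $\Delta\Omega=\widehat{\mathrm{skew}}(\Delta\mathbf{v})$ and $\mathrm{vec}(\Delta\Lambda)=G\Delta\mathbf{w}$ yields a linear bijection $(\Delta\mathbf{c},\Delta\mathbf{v},\Delta\mathbf{w})\leftrightarrow(\Delta\mathbf{c},\Delta Q,\Delta\Lambda)$, because $\widehat{\mathrm{vec}}$ and $\widehat{\mathrm{skew}}$ are mutually inverse (so $\widehat P$ is injective by (\ref{def:PHAT})), $G$ is a full-column-rank selection matrix by (\ref{Dw}), and $Q_*$ is invertible. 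Next I would apply $\mathrm{vec}$ to (\ref{eq:matrixeq11}) and simplify using $\mathrm{vec}(UBV)=(V^{T}\otimes U)\mathrm{vec}(B)$, $(AB)\otimes(CD)=(A\otimes C)(B\otimes D)$, the symmetry $(\overline{\Lambda}_*)^{T}=\overline{\Lambda}_*$, the identity $\Delta QQ_*^{T}=Q_*\Delta\Omega Q_*^{T}$ after substitution, and the definitions (\ref{def:AHAT}), (\ref{DQ}), (\ref{Dw}): the term $A(\Delta\mathbf{c})-A_0$ becomes $\widehat A\,\Delta\mathbf{c}$, the commutator part $Q_*\overline{\Lambda}_*Q_*^{T}\Delta QQ_*^{T}-\Delta Q\overline{\Lambda}_*Q_*^{T}$ becomes $(Q_*\otimes Q_*)(I_n\otimes\overline{\Lambda}_*-\overline{\Lambda}_*\otimes I_n)\widehat P\,\Delta\mathbf{v}$, and $(Q_*P)\Delta\Lambda(Q_*P)^{T}$ becomes $\big((Q_*P)\otimes(Q_*P)\big)G\,\Delta\mathbf{w}$; up to the immaterial sign of the last block this is precisely (\ref{eq:matrixeq22}).

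To conclude, I would note that $\mathrm{vec}$ is injective, so (\ref{eq:matrixeq11}) holds if and only if (\ref{eq:matrixeq22}) holds; and since the change of variables is a linear bijection, the solution set of (\ref{eq:matrixeq11}) is trivial if and only if the solution set of (\ref{eq:matrixeq22}) is $\{(\mathbf{0},\mathbf{0},\mathbf{0})\}$, i.e.\ if and only if the coefficient matrix $\big[\widehat A,\ (Q_*\otimes Q_*)(I_n\otimes\overline{\Lambda}_*-\overline{\Lambda}_*\otimes I_n)\widehat P,\ (Q_*P)\otimes(Q_*P)G\big]$ has trivial kernel, which is exactly the asserted full-rank condition; combined with the first paragraph, this proves the theorem. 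I do not anticipate a real obstacle: the proof is a reduction together with Kronecker-product bookkeeping, and the only points requiring care are (i) checking that the parametrization $(\Delta\mathbf{c},\Delta\mathbf{v},\Delta\mathbf{w})\leftrightarrow(\Delta\mathbf{c},\Delta Q,\Delta\Lambda)$ is genuinely a bijection onto the tangent space --- in particular that $\widehat P$ and $G$ are injective and that $\Delta Q\mapsto Q_*^{T}\Delta Q$ identifies $T_{Q_*}\mathcal{O}(n)$ with the skew-symmetric matrices --- so that ``only the trivial solution'' is preserved under the change of variables, and (ii) tracking the Kronecker identities correctly, where the symmetry of $\overline{\Lambda}_*$ is exactly what produces the commutator factor $I_n\otimes\overline{\Lambda}_*-\overline{\Lambda}_*\otimes I_n$.
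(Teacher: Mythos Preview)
Your proposal is correct and follows essentially the same route as the paper: the paper's argument is precisely the discussion preceding the theorem, which reduces the surjectivity of $\mathrm{D}H(X_*)$ to the triviality of the solution set of (\ref{eq:matrixeq11}), parametrizes the tangent space via $\widehat{P}$ and $G$ as in (\ref{DQ})--(\ref{Dw}), and vectorizes to obtain (\ref{eq:matrixeq22}). You add a bit more justification than the paper (explicitly checking that the parametrization is a bijection and flagging the harmless sign discrepancy in the third block), but the method and the key steps are identical.
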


\subsection{Preconditioning technique}
In this section, we propose a preconditioner for solving (\ref{eq:le}). Here we adapt a centered preconditioner \cite[p.279]{SAAD03}.
For the CG method, instead of solving (\ref{eq:le}), we solve  the following  preconditioned linear system
\[
\left\{
\begin{array}{l}
(\mathrm{D}H(X_k))^*\circ M_k^{-1} \circ \mathrm{D}H(X_k)
[  \Delta X_k ] =-(\mathrm{D}H(X_k))^*\circ M_k^{-1} [H(X_k)], \\[2mm]
\text{s.t.} \quad \Delta X_k
\in T_{(\mathbf{c},Q,\Lambda)}\mathbb{R}^{l} \times \mathcal{O}(n) \times \cd(n-m),
\end{array}
\right.
\]
where $M_k:T_{H(X_k)}\SRn \rightarrow T_{H(X_k)}\SRn$ is a self-adjoint and positive definite linear operator.

An efficient centered preconditioner $M_k$ may be defined by
\begin{eqnarray}\label{def:mk}
M_k[\Delta Z_k]
&:=& (A(\mathbf{v}(\Delta Z_k))-A_0) + \big[ Q_k\overline{\Lambda}_k Q_k^T, [ Q_k\overline{\Lambda}_k Q_k^T, \Delta Z_k] \big] \nonumber\\
& & + Q_kPP^TQ_k^T \Delta Z_k Q_kPP^TQ_k^T + \hat{t} \Delta Z_k,
\end{eqnarray}
for all $ \Delta Z_k\in T_{H(X_k)}\SRn$, where $\hat{t}>0$ is a given constant.
Using (\ref{eq:LSIEPDF1}) and (\ref{eq:LSIEPDF2}) we have
\begin{eqnarray*}
& &\big(\mathrm{D}H(X_k)\circ (\mathrm{D}H(X_k))^*+
\hat{t}\mathrm{Id}_{T_{H(X_k)} \SRn}\big)[\Delta Z_k ] \\
&=& (A(\mathbf{v}(\Delta Z_k))-A_0) + \big[ Q_k\overline{\Lambda}_k Q_k^T, [ Q_k\overline{\Lambda} _kQ_k^T, \Delta Z_k] \big]\\
&&+ Q_kP{\rm Diag}\big(P^TQ_k^T \Delta Z_k Q_kP\big)P^TQ_k^T + \hat{t} \Delta Z_k,
\end{eqnarray*}
for all $ \Delta Z_k\in T_{H(X_k)}\SRn$, where $\mathrm{Id}_{T_{H(X_k)} \SRn}$ means the identity mapping on $T_{H(X_k)} \SRn$.
This shows that
\[
M_k \approx \mathrm{D}H(X_k)\circ (\mathrm{D}H(X_k))^*+ \hat{t}\mathrm{Id}_{T_{H(X_k)} \SRn}.
\]
From (\ref{def:mk}) we see that for any $ \Delta Z_k\in T_{H(X_k)}\SRn$,
\begin{eqnarray*}
 &&\mathrm{vec}\big(M_k [\Delta Z_k]\big) \\
&=&\mathrm{vec}\big(A(\mathbf{v}(\Delta Z_k))-A_0\big)  + \big( (Q_kPP^TQ_k^T) \otimes  (Q_kPP^TQ_k^T) \big)\mathrm{vec}(\Delta Z_k) \\
& & + \big((Q_k \otimes Q_k)(I_n \otimes \overline{\Lambda}_k - \overline{\Lambda}_k \otimes I_n)^2(Q_k^T \otimes Q_k^T)+\hat{t}I_{n^2} \big)\mathrm{vec}(\Delta Z_k) \\
&=& (Q_k \otimes Q_k)\big( (I_n\otimes \overline{\Lambda}_k - \overline{\Lambda}_k \otimes I_n)^2
+ (PP^T)\otimes (PP^T) + \hat{t}I_{n^2} \big)(Q_k^T \otimes Q_k^T)\mathrm{vec}(\Delta Z_k) \\
&& +\widehat{A}\widehat{A}^T \mathrm{vec}(\Delta Z_k).
\end{eqnarray*}
Let
\[
\widehat{B}_k : = (Q_k \otimes Q_k)\big( (I_n \otimes \overline{\Lambda}_k - \overline{\Lambda}_k\otimes I_n)^2
+ (PP^T)\otimes (PP^T)+\hat{t}I_{n^2} \big)(Q_k^T \otimes Q_k^T).
\]
It is clear that $\widehat{B}_k$ is positive definite.
Thus, for any $\Delta Z_k\in T_{H(X_k)}\SRn$,
\[
\left\{
\begin{array}{l}
\mathrm{vec}\big(M_k [\Delta Z_k]\big) = \big(\widehat{B}_k + \widehat{A}\widehat{A}^T \big)\mathrm{vec}(\Delta Z_k),\\[2mm]
\mathrm{vec}\big(M_k^{-1} [\Delta Z_k]\big) = \big(\widehat{B}_k + \widehat{A}\widehat{A}^T \big)^{-1}\mathrm{vec}(\Delta Z_k).
\end{array}
\right.
\]
Note that
\[
\widehat{A}\widehat{A}^T = \sum_{i=1}^{l} \mathrm{vec}(A_i)\mathrm{vec}(A_i)^T.
\]
Thus the matrix $\widehat{A}\widehat{A}^T$ is a low rank matrix, i.e.,
$\mathrm{rank}(\widehat{A}\widehat{A}^T) \leq l$.
Let
\[
\widehat{M}_k : = \widehat{B}_k + \widehat{A}\widehat{A}^T.
\]
By assumption, $l<m\le n<n^2$, $\widehat{M}_k$ is a low rank perturbation of
$\widehat{B}_k$. By using the Sherman-Morrison-Woodbury formula \cite{H89}, we can obtain
\[
\widehat{M}_k^{-1} = \big(\widehat{B}_k + \widehat{A}\widehat{A}^T\big)^{-1}
= \widehat{B}_k^{-1} - \widehat{B}_k^{-1}\widehat{A}\big(I_l + \widehat{A}^T\widehat{B}_k^{-1}\widehat{A} \big)^{-1}
\widehat{A}^T\widehat{B}_k^{-1},
\]
where
\[
\widehat{B}_k^{-1} = (Q_k \otimes Q_k)\Big((I_{n}\otimes \overline{\Lambda}_k - \overline{\Lambda}_k \otimes I_{n})^2
+ (PP^T)\otimes (PP^T) +\widehat{t}I_{n^2}\Big)^{-1}(Q_k^T \otimes Q_k^T),
\]
which can be  computed easily. For any vector $\mathbf{x}\in \mathbb{R}^{n^2}$, the matrix-vector products $(Q_k \otimes Q_k)\mathbf{x}$
and $(Q_k^T \otimes Q_k^T)\mathbf{x}$ can be computed via
\[
(Q_k \otimes Q_k)\mathbf{x} = \mathrm{vec}(Q_k \widehat{X}Q_k^T)\quad \mbox{and} \quad
(Q_k^T \otimes Q_k^T)\mathbf{x} = \mathrm{vec}(Q_k^T \widehat{X}Q_k),
\]
where $\widehat{X}\in \Rnn$ is the matrix such that $\mathrm{vec}(\widehat{X})=\mathbf{x}$.
We conclude that the matrix-vector product $\widehat{M}_k^{-1}\mathbf{x}$ can be computed efficiently, where the main computational cost is to calculate the inverse of $(I_l + \widehat{A}^T\widehat{B}_k^{-1}\widehat{A} )\in \R^{l\times l}$.

\section{Numerical Experiments}\label{sec4}
In this section we report the numerical performance of Algorithm \ref{nm1} for solving Problem (\ref{LSIEP}). All the numerical tests are carried out by using {\tt MATLAB} 7.1 running on a workstation with a Intel Xeon CPU E5-2687W at  3.10 GHz and 32 GB of RAM. To illustrate the efficiency of our algorithm, we compare Algorithm \ref{nm1} with the LP-Newton method (LP-N) in \cite{CC96}.

In our numerical tests, we set $\beta=0.5$, $\eta_{\max}=0.01$, $\sigma=10^{-4}$, and $\hat{t}=10^{-5}$.
The largest number of iterations in Algorithm \ref{nm1} and the LP-Newton method is set to be $10^5$,
and the largest number of iterations in the CG method is set to be $n^3$. Let `{\tt CT.}', `{\tt IT.}',  `{\tt LP.}',  `{\tt NF.}', `{\tt NCG.}',  `{\tt Res.}',  `{\tt grad.}' , and `{\tt err-c.}' denote the averaged total computing time in seconds, the averaged number of outer Newton or Gauss-Newton iterations, the averaged number of LP iterations,
the averaged number of function evaluations, the averaged total number of inner CG iterations,
the averaged residual $\|H(X_k)\|_F$ or $\sqrt{2f(\bc_k,\sigma_*)}$,   the averaged residual
$\|\grad h(X_k)\|$ or $\|\grad f(\bc_k,\sigma_*)\|$, and the averaged relative error $\|\bc_k-\widehat{\bc}\|_\infty/\|\widehat{\bc}\|_\infty$ at the final iterates of
the corresponding algorithms, accordingly.

For the LP-Newton method, the stopping criterion for the LP step is set to be
\[
\|\bc_{k}-\bc_{k-1}\|_F < 10^{-3}
\]
and the stopping criterion for the Newton step is set to be
\[
 \|\grad f(\bc_k,\sigma_*)\|_F < \zeta,
\]
and the stopping criterion for Algorithm \ref{nm1} is set to be
\[
 \|\grad h(X_k)\|_F < \zeta,
\]
where $\zeta>0$ is the prescribed tolerance.

We consider the following three examples.

%---------------------
\begin{example}\label{ex:1} \protect{\bf \cite{CC96}}
We consider the LSIEP with $n=l=m=5$. Let
\[
A_0 =
\left[
\begin{array}{ccccc}
0 & -1 & 0 & 0 & 0\\
-1 & 0 & -1 & 0 & 0 \\
0 & -1 & 0 & -1 & 0\\
0 & 0 & -1 & 0 & -1 \\
0 & 0 & 0 &-1 & 0
\end{array}
\right],\quad
A_k = 4\be_k\be_k^T, \quad k = 1,2,\ldots,5.
\]
We choose $\{1, 1, 2, 3, 4\}$ as the prescribed spectrum.
\end{example}
%%---------------------
\begin{example}\label{ex:2}
We consider the Sturm-Liouville problem of the form{\rm :}
\BE\label{ex:sl}
-\frac{d^{2}y}{dx^{2}}+q(x)y=\la y,\quad 0\le x\le\pi,
\EE
where $q$ is a real, square-integrable function and the following Dirichlet boundary conditions are imposed
\[
y(0)=y(\pi)=0.
\]
By using the Rayleigh-Ritz method in {\rm \cite{H78}},  the Rayleigh quotient of {\rm (\ref{ex:sl})} is given by
\[
R\big(y(x)\big) = \frac{ \int^{\pi}_0 \big((y'(x))^2 + q(x)y(x)^2\big){\rm d}x}{ \int^{\pi}_0 y(x)^2 {\rm d}x}.
\]
Suppose that $y(x) = \sum\limits_{j=1}^n w_j \sin(jx)$. By simple calculation, we have
\[%\BE\label{rq}
R\big(y(x)\big) =  \frac{\sum\limits_{i=1}^n \sum\limits_{j=1}^n i\cdot j \cdot w_i\cdot w_j
\cdot \delta^i_j
+ \frac{2}{\pi}\cdot \sum\limits_{i=1}^n \sum\limits_{j=1}^n w_i\cdot w_j \int^{\pi}_0 q(x) \sin(ix) \sin(jx) {\rm d}x }
{\sum\limits_{i=1}^n \sum\limits_{j=1}^n w_i\cdot w_j \cdot \delta^i_j},
\]
i.e.,
\[
\begin{array}{rcl}
R\big(y(x)\big) =  \frac{ \bw^T A \bw }{\bw^T\bw},
\end{array}
\]
where $\bw := (w_1, w_2, \ldots, w_n)^T$ and the entries of the symmetric matrix $A=[a_{ij}]\in\Rnn$ are given by
\[
\begin{array}{rcl}
a_{ij}
&=&  i\cdot j \cdot \delta^i_j +
\frac{2}{\pi}\cdot \int^{\pi}_0 q(x) \sin(ix) \sin(jx) {\rm d}x\\[4mm]
&=&  i\cdot j \cdot \delta^i_j +
\frac{2}{\pi} \int^{\pi}_0 q(x) \frac{\cos\big((i-j)x\big) - \cos\big((i+j)x\big)}{2} {\rm d}x,
\end{array}
\]
for $i,j=1,2,\ldots,n$.
If $q(x)=2\sum\limits_{k=1}^l c_k \cos(2kx)$, then one has
\[
a_{ij}=  i\cdot j  \cdot \delta^i_j +
\sum_{k=1}^l c_k \cdot \big(\delta^{2k}_{|i-j|}  -\delta^{2k}_{i+j} \big), \quad i,j=1,2,\ldots,n.
\]

Let $T_k$ $(k=1,2,\ldots, n-1)$ and $H_k$ $(k=1,2, \ldots, 2n-1)$ be $n\times n$ real matrices generated by
the {\tt MATLAB} built-in functions {\tt toeplitz} and {\tt hankel}{\rm :}
\[
T_k = {\tt toeplitz}(\be_{k+1}),\quad k=1,2,\ldots, n-1
\]
and
\[
H_k =
\left\{
\begin{array}{ll}
{\tt hankel}(\be_k,{\bf 0}_n), & k=1,2,\ldots, n,\\[2mm]
{\tt hankel}({\bf 0}_n,\be_{k-n+1}), & k=n+1,n+2,\ldots,2n-1.
\end{array}
\right.
\]
Define
\[
A_0 = \diag(1,2^2, 3^2, \ldots, n^2),\quad
A_k=
\left\{
\begin{array}{ll}
T_{2k} - H_{2k-1}, &  1 \leq k \leq \min\Big\{l, \frac{n-1}{2}\Big\}, \\[2mm]
-H_{2k-1}, &   \min\Big\{l, \frac{n-1}{2}\Big\} < k \leq l.
\end{array}
\right.
\]
Then
\[
A = A_0+\sum_{k=1}^l c_kA_k\equiv A(\bc).
\]
To estimate  the first $l$ Fourier coefficients of the potential $q(x)$ defined by {\rm \cite{H78}}
\[
q(x) = \sum_{k=1}^{\infty}  \frac{192}{\pi^4}\frac{1}{k^4}\cos(2kx),
\]
we consider the LSIEP with above $\{A_k\}$ and the $n$ eigenvalues of $A(\widehat{\bc})$ as the prescribed spectrum
for varying $n=m$ and $l$, where the entries of $\widehat{\bc}$ are given by
\[
\hat{c}_k= \frac{192}{\pi^4}\frac{1}{k^4}, \quad k=1,2,\ldots,l.
\]
\end{example}
%---------------------

%---------------------
\begin{example}\label{ex:3}
We consider the LSIEP with varying $n$, $l$, and $m$. Let $\widehat{\mathbf{c}}\in \mathbb{R}^{l}$ be a random vector and
$A_0,A_1,\ldots ,A_l $ be $n\times n$ random symmetric matrices, which are generated by the {\tt MATLAB} built-in function {\tt randn}{\rm :}
\[
\widehat{\mathbf{c}} := {\tt randn}(l,1),\quad B_k : = {\tt randn} (n,n),
\quad A_k = \frac{1}{2}(B_k + B_k^T), \quad k=0,1,\ldots,l.
\]
We choose the $m$ smallest eigenvalues of $A(\widehat{\bc})$ as the prescribed partial spectrum.
\end{example}
%---------------------

For Algorithm \ref{nm1}  and the LP-Newton method in \cite{CC96},  the starting points are generated by
 the {\tt MATLAB} built-in function {\tt eig}:
\[%\BE\label{sp1}
\begin{array}{ll}
\big[ Q_0, \widetilde{\Lambda} \big] = \mbox{\tt eig}\,(A(\bc_0),{\rm 'real'}), \quad
&\Lambda_0 = \widetilde{\Lambda}(m+1:n).
\end{array}
\]
For Example \ref{ex:1}, $\bc_0$ is set to be
\[
\bc_0 = (0.6316, 0.2378, 0.9092, 0.9866, 0.5007)^T.
\]
For Example \ref{ex:2}, $\bc_0$ is set to be a zero vector. For Example \ref{ex:3}, $\bc_0$  is formed by chopping the components of $\widehat{\bc}$ to two decimal places for  $n<100$ and  to three decimal places for $n\geq 100$.

We first apply the LP-Newton method and Algorithm \ref{nm1} to Example \ref{ex:1} with $\zeta=10^{-7}$. Both methods converge to the same least squares solution:
\[
\bc_*=(0.4423,0.6044,0.6566,0.6044,0.4423)^T
\]
and the spectrum of $A(\bc_*)$ is $\{0.5888,1.0422,2.0742,3.1446,4.1501\}$.

Table \ref{table1} lists numerical results for Example \ref{ex:1}. We see from Table \ref{table1} that Algorithm \ref{nm1} is not as effective as the LP-Newton method since $\|H(X_*)\|_F=0.4688$ is not small enough.

%----------------------------
\begin{table}[ht]\renewcommand{\arraystretch}{1.2} \addtolength{\tabcolsep}{-2pt}
  \caption{Comparison results for Example \ref{ex:1}.}\label{table1}
  \begin{center} {\scriptsize
   \begin{tabular}[c]{|c|c|c|c|c|c|l|l|}
     \hline
Alg.  & {\tt CT.} & {\tt LP.} & {\tt IT.} & {\tt NF.} &  {\tt NCG.}   &  {\tt Res.}   &  {\tt grad.}  \\  \hline
LP-N                     &  0.0320 s & 29 & 2  &  3  &  4.5 & $0.4688$ & $1.12\times 10^{-8}$   \\
Alg. \ref{nm1} with CG   &  0.6880 s &    & 635&2776 & 18.3 & $0.4688$ & $8.99\times 10^{-8}$  \\
Alg. \ref{nm1}  with PCG & 0.3280 s  &    & 655&2855 &  1.4 & $0.4688$ & $9.83\times 10^{-8}$  \\ \hline
  \end{tabular} }
  \end{center}
\end{table}
%----------------------------

We now apply the LP-Newton method and Algorithm \ref{nm1} to Example \ref{ex:2} with $\zeta=10^{-8}$. Table \ref{table2} displays numerical results for Example \ref{ex:2}. We observe that Algorithm \ref{nm1} works much better than the LP-Newton method in terms of computing time.  We also see that the proposed preconditioner is very efficient.

%----------------------------
\begin{table}[ht]\renewcommand{\arraystretch}{1.2} \addtolength{\tabcolsep}{-2pt}
  \caption{Comparison results for Example \ref{ex:2}.}\label{table2}
  \begin{center} {\scriptsize
   \begin{tabular}[c]{|c|c|c|c|c|c|c|l|l|l|}
     \hline
Alg. & $(n,l,m)$  & {\tt CT.} & {\tt LP.} & {\tt IT.} & {\tt NF.} &  {\tt NCG.}   &  {\tt Res.}   &  {\tt grad.}  & {\tt err-c.} \\  \hline
               & (10,  6, 10)   &  0.1560 s  &  53 & 2 &  3  &   6   & $7.98\times 10^{-11}$ & $1.05\times 10^{-10}$ & $2.44\times 10^{-11}$  \\
               & (20, 12, 20)   &  1.4630 s  & 101 & 2 &  3  &   7   & $1.14\times 10^{-9}$  & $1.45\times 10^{-9}$  & $3.31\times 10^{-10}$  \\
LP-N           & (30, 18, 30)   &  8.4830 s  & 144 & 2 &  3  &   7   & $4.57\times 10^{-9}$  & $5.81\times 10^{-9}$  & $1.35\times 10^{-9}$ \\
               & (40, 22, 40)   &  30.109 s  & 181 & 3 &  4  &   7   & $3.77\times 10^{-12}$ & $2.56\times 10^{-12}$ & $1.69\times 10^{-12}$   \\
               & (50, 34, 50)   & 02 m 01 s  & 216 & 3 &  4  &   7   & $6.29\times 10^{-12}$ & $5.35\times 10^{-12}$ & $1.46\times 10^{-12}$  \\
               \cline{2-9}\hline
               & (10,  6, 10)   &  0.0450 s  &     & 6 &  7  &  30.3 & $4.94\times 10^{-14}$ & $1.41\times 10^{-13}$ & $2.01\times 10^{-14}$  \\
Alg. \ref{nm1} & (20, 12, 20)   &  0.0600 s  &     & 6 &  7  & 133.7 & $1.16\times 10^{-10}$ & $1.97\times 10^{-10}$ & $4.44\times 10^{-11}$  \\
with           & (30, 18, 30)   &  0.1830 s  &     & 6 &  7  & 355.2 & $3.48\times 10^{-11}$ & $1.99\times 10^{-10}$ & $9.40\times 10^{-12}$  \\
CG             & (40, 22, 40)   &  0.4890 s  &     & 6 &  7  & 700.8 & $3.22\times 10^{-12}$ & $3.05\times 10^{-11}$ & $1.57\times 10^{-12}$   \\
               & (50, 34, 50)   &  0.9690 s  &     & 6 &  7  & 1164  & $4.68\times 10^{-12}$ & $4.87\times 10^{-11}$ & $1.26\times 10^{-12}$ \\
               \cline{2-9}\hline
               & (10,  6, 10)   &  0.0420 s  &     & 5 &  6  &  1.2  & $5.56\times 10^{-14}$ & $1.69\times 10^{-13}$ & $2.83\times 10^{-14}$   \\
Alg. \ref{nm1} & (20, 12, 20)   &  0.0120 s  &     & 5 &  6  &  1.2  & $2.92\times 10^{-13}$ & $9.67\times 10^{-13}$ & $7.19\times 10^{-14}$   \\
with           & (30, 18, 30)   &  0.0080 s  &     & 5 &  6  &  1.2  & $1.32\times 10^{-12}$ & $2.02\times 10^{-12}$ & $7.17\times 10^{-13}$  \\
PCG            & (40, 22, 40)   &  0.0100 s  &     & 5 &  6  &  1.2  & $2.81\times 10^{-12}$ & $4.85\times 10^{-12}$ & $1.35\times 10^{-12}$  \\
               & (50, 34, 50)   &  0.0180 s  &     & 5 &  6  &  1.2  & $3.72\times 10^{-12}$ & $1.13\times 10^{-11}$ & $1.49\times 10^{-12}$ \\
               \cline{2-9}\hline
  \end{tabular} }
  \end{center}
\end{table}
%----------------------------

Next, we apply the LP-Newton method and Algorithm \ref{nm1} to Example \ref{ex:3}  with $\zeta=10^{-8}$. For comparison purposes,  we repeat our experiments over $10$ different problems. Table \ref{table3} shows numerical results for Example \ref{ex:3}.
We observe from Table \ref{table3} that Algorithm \ref{nm1} is more effective than the LP-Newton method in terms of computing time. We also see that the proposed preconditioner can reduce the number of inner CG iterations effectively.

To further illustrate the efficiency of Algorithm \ref{nm1}, we apply Algorithm \ref{nm1} with the proposed preconditioner to Examples \ref{ex:2}--\ref{ex:3} for varying $n,l,m$. The corresponding numerical results are displayed in Tables \ref{table5}--\ref{table6}.
We see from  Tables \ref{table5}--\ref{table6} that Algorithm \ref{nm1} with the proposed preconditioner works very efficient for different values of $n,l,m$.  Finally, the quadratic convergence of Algorithm \ref{nm1} is observed from Figure \ref{fig}, which agrees with our prediction.

%----------------------------
\begin{table}[ht]\renewcommand{\arraystretch}{1.2} \addtolength{\tabcolsep}{-2pt}
  \caption{Comparison results for Example \ref{ex:3}.}\label{table3}
  \begin{center} {\scriptsize
   \begin{tabular}[c]{|c|c|c|c|c|c|c|l|l|l|}
     \hline
Alg. & $(n,l,m)$  & {\tt CT.} & {\tt LP.} & {\tt IT.}& {\tt NF.} &  {\tt NCG.}   &  {\tt Err.}   &  {\tt Res.}  & {\tt err-c.} \\  \hline
               & (10,  6,  8)   &  0.0794 s & 3.3 & 3.1 & 4.1 &  60 & $4.22\times 10^{-11}$ & $6.15\times 10^{-11}$ & $2.28\times 10^{-11}$\\
               & (20, 10, 18)   &  1.1105 s & 3.5 & 3.0 & 4.0 &  40 & $4.89\times 10^{-11}$ & $1.51\times 10^{-10}$ & $1.14\times 10^{-11}$  \\
LP-N           & (30, 16, 25)   &  6.6156 s & 5.0 & 3.0 & 4.0 &  68 & $3.74\times 10^{-10}$ & $1.31\times 10^{-9}$  & $4.30\times 10^{-11}$   \\
               & (40, 20, 32)   &  22.379 s & 5.5 & 3.2 & 4.2 &  85 & $3.57\times 10^{-10}$ & $1.92\times 10^{-9}$  & $4.43\times 10^{-11}$ \\
               & (50, 34, 42)   &02 m 29 s  & 6.4 & 4.4 & 5.4 & 392 & $1.01\times 10^{-10}$ & $5.23\times 10^{-10}$ & $5.99\times 10^{-12}$   \\
               \cline{2-9}\hline
               & (10,  6,  8)   & 0.0150 s  &     & 3.9 & 4.9 & 676 & $5.31\times 10^{-11}$ & $5.78\times 10^{-10}$ & $1.80\times 10^{-12}$ \\
Alg. \ref{nm1} & (20, 10, 18)   & 0.0890 s  &     & 4.2 & 5.2 & 919 & $1.16\times 10^{-12}$ & $2.60\times 10^{-12}$ & $9.49\times 10^{-14}$  \\
with           & (30, 16, 25)   & 0.2763 s  &     & 4.7 & 5.7 &2130 & $2.75\times 10^{-11}$ & $9.27\times 10^{-11}$ & $1.75\times 10^{-12}$ \\
CG             & (40, 20, 32)   & 1.0520 s  &     & 8.0 &41.7 &3799 & $1.98\times 10^{-12}$ & $4.49\times 10^{-11}$ & $3.27\times 10^{-14}$   \\
               & (50, 34, 42)   & 1.8564 s  &     & 6.7 &17.6 &15517& $7.69\times 10^{-11}$ & $7.04\times 10^{-10}$ & $5.76\times 10^{-12}$   \\
               \cline{2-9}\hline
               & (10,  6,  8)   & 0.0031 s  &     & 3.0 & 4.0 &21.7 & $1.47\times 10^{-12}$ & $8.03\times 10^{-12}$ & $1.26\times 10^{-13}$   \\
Alg. \ref{nm1} & (20, 10, 18)   & 0.0117 s  &     & 3.0 & 4.0 & 8.7 & $2.16\times 10^{-10}$ & $6.09\times 10^{-10}$ & $9.17\times 10^{-13}$  \\
with           & (30, 16, 25)   & 0.0137 s  &     & 3.5 & 4.5 &  18 & $8.18\times 10^{-11}$ & $6.14\times 10^{-10}$ & $1.70\times 10^{-13}$ \\
PCG            & (40, 20, 32)   & 0.0110 s  &     & 3.5 & 4.5 &18.5 & $2.07\times 10^{-10}$ & $2.46\times 10^{-9}$  & $3.50\times 10^{-13}$  \\
               & (50, 34, 42)   & 0.0203 s  &     & 4.0 & 5.0 &43.2 & $2.87\times 10^{-13}$ & $1.01\times 10^{-11}$ & $7.57\times 10^{-15}$ \\
               \cline{2-9}\hline
  \end{tabular} }
  \end{center}
\end{table}
%----------------------------

%----------------------------
\begin{table}[ht]\renewcommand{\arraystretch}{1.2} \addtolength{\tabcolsep}{-2pt}
  \caption{Numerical results for Example \ref{ex:2}.}\label{table5}
  \begin{center} {\scriptsize
   \begin{tabular}[c]{|c|c|c|c|c|l|l|l|}
     \hline
$(n,l,m)$  & {\tt CT.}& {\tt IT.}   & {\tt NF.} &  {\tt NCG.}   &  {\tt Res.}   &  {\tt grad.}  &  {\tt err-c.}\\  \hline
(100,  50, 100) & 0.0920 s  &  5 &  6  & 1.2 & $3.24\times 10^{-11}$ & $5.79\times 10^{-11}$ & $1.49\times 10^{-11}$  \\
(200,  60, 200) & 0.3800 s  &  5 &  6  & 1.2 & $2.46\times 10^{-10}$ & $3.42\times 10^{-10}$ & $6.45\times 10^{-11}$   \\
(300,  70, 300) & 1.0400 s  &  5 &  6  & 1.2 & $5.73\times 10^{-10}$ & $9.52\times 10^{-10}$ & $1.61\times 10^{-10}$ \\
(400,  75, 400) & 2.3570 s  &  5 &  6  & 1.2 & $1.78\times 10^{-9}$  & $1.87\times 10^{-9}$  & $4.00\times 10^{-10}$ \\
(500,  80, 500) & 3.8170 s  &  5 &  6  & 1.2 & $2.97\times 10^{-9}$  & $3.14\times 10^{-9}$  & $4.01\times 10^{-10}$ \\
(600,  85, 600) & 5.8630 s  &  5 &  6  & 1.2 & $4.76\times 10^{-9}$  & $4.74\times 10^{-9}$  & $9.18\times 10^{-10}$ \\
(800,  90, 800) & 12.032 s  &  5 &  6  & 1.2 & $1.12\times 10^{-8}$  & $9.54\times 10^{-9}$  & $2.29\times 10^{-9}$ \\
\cline{2-8}\hline
  \end{tabular} }
  \end{center}
\end{table}

%----------------------------
\begin{table}[ht]\renewcommand{\arraystretch}{1.2} \addtolength{\tabcolsep}{-2pt}
  \caption{Numerical results for Example \ref{ex:3}.}\label{table6}
  \begin{center} {\scriptsize
   \begin{tabular}[c]{|c|c|c|c|c|c|l|l|l|}
     \hline
$(n,l,m)$  & {\tt CT.}& {\tt IT.}   & {\tt NF.} &  {\tt NCG.}   &  {\tt Err.}   &  {\tt Res.}   &  {\tt err-c.}\\  \hline
(100,  60,  80) &  0.0905 s & 3.0 & 4.0 & 27 & $3.04\times 10^{-11}$ & $4.17\times 10^{-10}$ & $3.73\times 10^{-14}$ \\
(200, 120, 160) &  0.6575 s & 3.0 & 4.0 & 33 & $6.86\times 10^{-11}$ & $1.85\times 10^{-9}$  & $4.00\times 10^{-14}$   \\
(300, 160, 200) &  3.3588 s & 3.7 & 4.7 & 64 & $2.14\times 10^{-11}$ & $1.43\times 10^{-9}$  & $3.71\times 10^{-14}$  \\
(400, 220, 280) &  10.702 s & 4.0 & 5.0 & 65 & $9.42\times 10^{-12}$ & $2.51\times 10^{-9}$  & $4.19\times 10^{-14}$  \\
(500, 340, 400) &  22.921 s & 4.0 & 5.0 & 48 & $2.48\times 10^{-11}$ & $5.64\times 10^{-9}$  & $6.29\times 10^{-14}$  \\
(600, 420, 480) &  46.968 s & 4.2 & 5.2 & 54 & $2.13\times 10^{-11}$ & $9.58\times 10^{-9}$  & $7.95\times 10^{-14}$  \\
 \cline{2-7}\hline
  \end{tabular} }
  \end{center}
\end{table}

%%---------------------------
\begin{figure}[ht]
\centering
\includegraphics[width=0.4\textwidth]{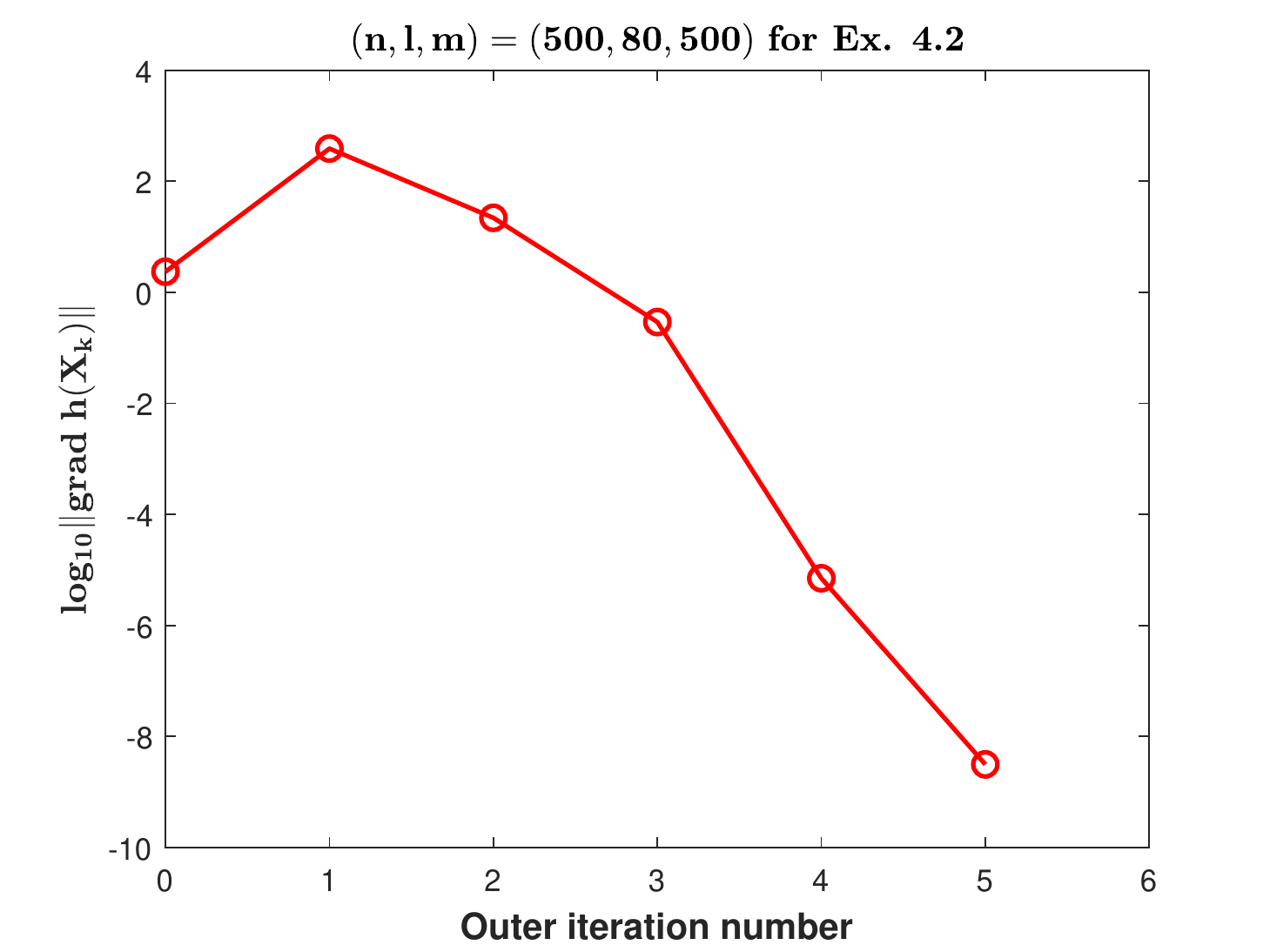}
\includegraphics[width=0.4\textwidth]{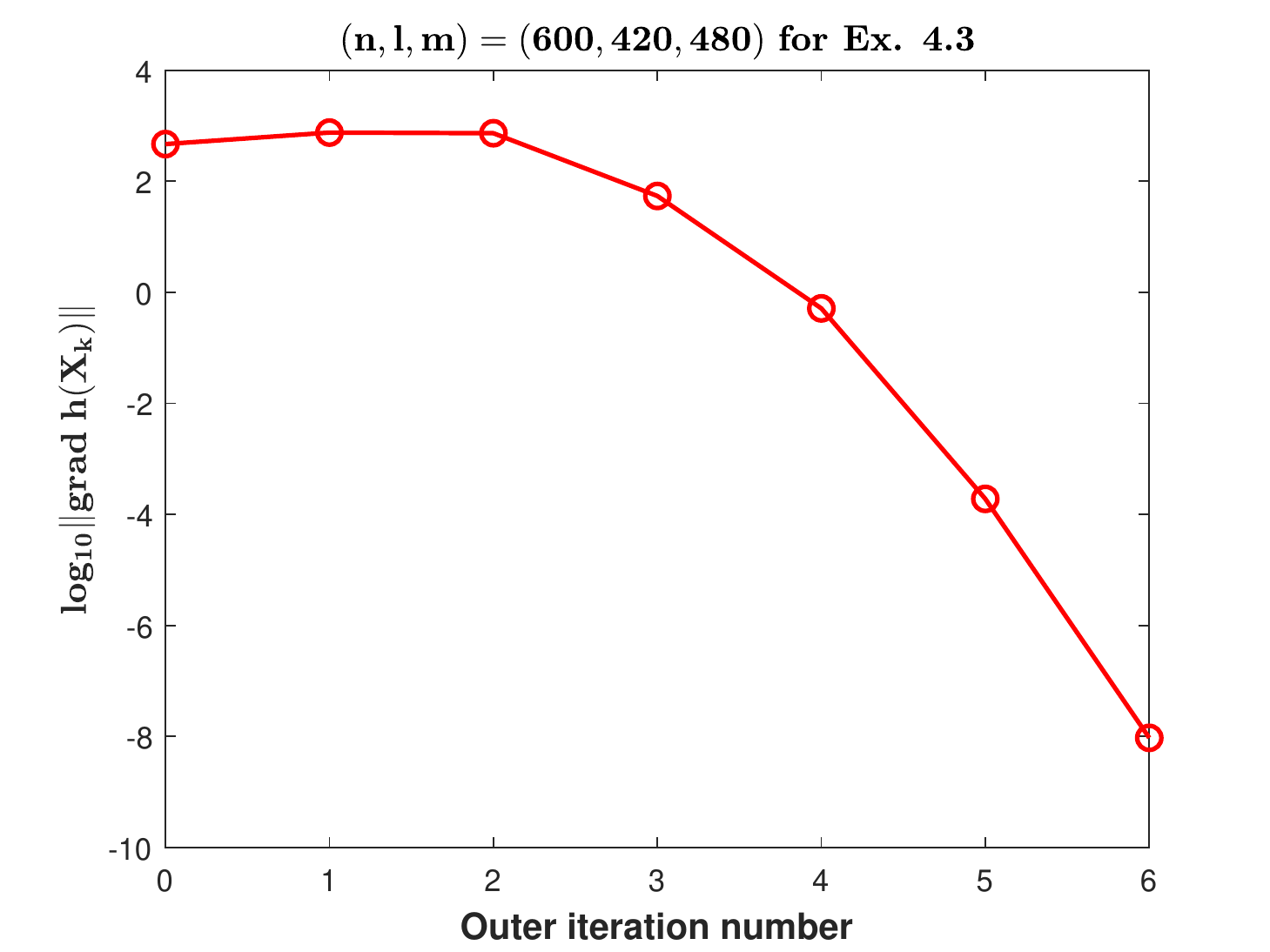}
\caption{Convergence history of two tests.}
 \label{fig}
\end{figure}

\section{Conclusions}\label{sec5}
In this paper, we have proposed a preconditioned Riemannian inexact Gauss-Newton method for solving the least squares inverse eigenvalue problem. The global and local convergence analysis of the method is established under some conditions. Numerical experiments show the efficiency of the proposed method.

%-----------------------------

\end{document}